\newtheorem{remark}{Remark}%
\newtheorem{definition}{Definition}%
\newtheorem*{claim*}{Claim}
\newtheorem{lemma}{Lemma}
\newtheorem*{lemma*}{Lemma}
\newtheorem{theorem}{Theorem}
\newtheorem*{theorem*}{Theorem}
\newtheorem{defn}{Definition}
\newtheorem*{defn*}{Definition}
\newtheorem*{convention*}{Convention}
\newtheorem{corollary}{Corollary}
\newtheorem{prop}{Proposition}
\newtheorem{assumption}{Assumption}
\newtheorem*{assumption*}{Assumption}
\def\amax{\alpha_{\mathrm{max}}}
\def\SC{\mathsf{TOC}}
\def\A{\mathcal{A}}
\def\R{\mathcal{R}}
\def\R{\mathbb{R}}
\def\Z{\mathbb{Z}}
\def\eps{\varepsilon}
\def\E{\mathbb{E}}
\def\P{\mathbb{P}}
\def\sc{\mathsf{oc}}
\def\mytheta{\theta}
\DeclareMathSymbol{\lsb@l}{\mathalpha}{letters}{`l}
\def\amax{\alpha_{\mathrm{max}}}
\def\Var{\text{Var}}
\newcommand\abs[1]{\left\lvert{#1}\right\rvert}
\newcommand\norm[1]{\left\lVert{#1}\right\rVert}
\newcommand\ceil[1]{\left\lceil{#1}\right\rceil}
\newcommand{\rev}[1]{{\color{black}{#1}}}
\newcommand*\samethanks[1][\value{footnote}]{\footnotemark[#1]}
\author{Billy Jin\thanks{School of Operations Research and Information Engineering, Cornell University, Ithaca, NY, USA. Email: \texttt{\{bzj3,ks2375,mx229\}@cornell.edu}.} \and Katya Scheinberg\samethanks[1] \and Miaolan Xie\samethanks[1]}
\title{
 Sample Complexity Analysis for Adaptive Optimization Algorithms with Stochastic Oracles
}
\begin{document}
\maketitle

\begin{abstract}
    	Several classical adaptive optimization algorithms, such as line search and trust-region methods, have been recently extended to stochastic settings where function values, gradients, and Hessians in some cases, are estimated via stochastic oracles. Unlike the majority of stochastic methods, these methods do not use a pre-specified sequence of step size parameters, but adapt the step size parameter according to the estimated progress of the algorithm and use it to dictate the accuracy required from the stochastic oracles. The requirements on the stochastic oracles are, thus, also adaptive and the oracle costs can vary from iteration to iteration. The step size parameters in these methods can increase and decrease based on the perceived progress, but unlike the deterministic case they are not bounded away from zero due to possible oracle failures, and bounds on the step size parameter have not been previously derived. This creates obstacles in the total complexity analysis of such methods, because the oracle costs are typically decreasing in the step size parameter, and could be arbitrarily large as the step size parameter goes to 0.  Thus, until now only the total iteration complexity of these methods has been analyzed. In this paper, we derive a lower bound on the step size parameter that holds with high probability for a large class of adaptive stochastic methods. We then use this lower bound to derive a framework for analyzing the expected and high probability total oracle complexity of any method in this class. Finally, we apply this framework to analyze the total sample complexity of two particular algorithms, STORM \cite{blanchet2019convergence} and SASS \cite{sass_arxiv}, in the expected risk minimization problem.
\end{abstract}

\section{Introduction}

The widespread use of stochastic optimization algorithms for problems arising in machine learning and signal processing has made the stochastic gradient method and its variants overwhelmingly popular despite their theoretical and practical shortcomings.  \emph{Adaptive stochastic optimization} algorithms, on the other hand,  borrow from decades of advances in deterministic optimization research, and offer new paths forward for stochastic optimization to be more effective and even more applicable. Adaptive algorithms can avoid many of the practical deficiencies of contemporary methods (such as the tremendous costs of tuning the step sizes of an algorithm for each individual application) while possessing strong convergence and worst-case complexity guarantees in surprisingly diverse settings. 

Adaptive optimization algorithms have a long and successful history in deterministic optimization and include line search, trust-region methods, cubic regularized Newton methods, etc. All these methods have a common iterative framework, where at each iteration a candidate step is computed by the algorithm based on a local model of the objective function and a step size parameter that controls the length of this candidate step.   The candidate step is then evaluated in terms of the decrease it achieves in the objective function, with respect to the decrease that it was expected to achieve based on the model. Whenever the decrease is sufficient, the step is accepted and the step size parameter may be increased to allow the next iteration to be more aggressive. If the decrease is not sufficient (or not achieved at all) then the step is rejected and a new step is computed using a smaller step size parameter. The model itself remains unchanged if it is known that a sufficiently small step will always succeed, as is true, for example with first- and second-order Taylor models of smooth functions. The analysis of such methods relies on the key property that the step size parameter is bounded away from zero and that once it is small enough the step is always accepted and the objective function gets reduced. 

When stochastic oracles are used to estimate the objective function and its derivatives, the models no longer have the same property as the Taylor models; steps that decrease the model might not decrease the function, no matter how small the step size is. Thus all stochastic variants of these methods recompute the model at each iteration. The requirement on the stochastic model is then that it achieves a Taylor-like approximation with sufficiently high probability. This also means that steps may get rejected even if the step size parameter is small, simply because the stochastic oracles fail to deliver desired accuracy. Despite this difficulty, one can develop and analyze stochastic variants of adaptive optimization methods.

Recently developed stochastic variants of line search (which we will call step search, since unlike the deterministic version the search direction has to change on each iteration, thus the algorithm is not searching along a line) include \cite{CS17, paquette2018stochastic, berahas2019global, sass_arxiv}.  Stochastic trust-region methods have been analyzed in \cite{blanchet2019convergence, gratton2018complexity, tr_witherrors}, and adaptive cubic regularized methods based on random models has been analyzed in \cite{CS17, sarc}. For all these methods,  bounds on iteration complexity have been derived, either in expectation or in high probability, under the assumption that stochastic oracles involved in estimating the objective function deliver sufficiently high accuracy with sufficiently high probability. 
While this probability is usually fixed, the accuracy requirement of the oracles is adaptive and depends on the step size parameter. Specifically, the smaller that parameter is, the more accurate the oracles need to be, in order to maintain the Taylor-like behavior of the model used by the algorithm. In most applications, having more accurate stochastic oracles implies a higher per-iteration cost. Furthermore, unlike the deterministic case, the step sizes in the stochastic case are not bounded away from zero due to possible oracle failures, and bounds on the step size parameter have not been previously derived. This creates significant difficulty in the analysis of the total oracle complexity of such methods because the oracle costs could be arbitrarily large as the step size parameter goes to zero. 
   
In this paper, we derive a lower bound on the step size parameter for a general class of stochastic adaptive methods that encompasses all the algorithms in the preceding paragraph. This enables us to derive a bound on the total oracle complexity for any algorithm within this class and specific stochastic oracles arising, for example, from expected risk minimization. 
  Our key contributions are as follows:
  \begin{itemize}
  \item Provide a high probability lower bound on the step size parameter for a wide class of stochastic adaptive methods using a coupling argument between the stochastic process generated by the algorithm and a one-sided random walk.
  \item Derive a framework for analyzing expected and high probability total oracle complexity bounds for this general class of stochastic adaptive methods. 
  \item Apply these bounds to STORM \rev{(Stochastic Trust-region Optimization with Random
Models)} \cite{blanchet2019convergence} and SASS \rev{(Stochastic Adaptive Step Search)} \cite{sass_arxiv}  to derive their total sample complexity for expected risk minimization, and show they essentially match the complexity lower bound of first-order algorithms for stochastic non-convex optimization \cite{arjevani2019lower}.
   \end{itemize}
\rev{We note that there is a plethora of  other algorithms in the literature that propose different ways to adaptively vary the step sizes in stochastic optimization.  
Some of these methods, such as \cite{adam, tan2016barzilai}, have step size dynamics that are very different from the framework studied in this paper, hence the analysis of this paper is not needed or applicable.
Some other adaptive methods, like \cite{rinaldi2023stochastic,shashaani2018astro,regier2017fast}, have step size dynamics that are more similar. However, since these papers do not provide iteration complexity bounds, we cannot obtain their sample complexity by directly applying our framework. Nonetheless, it would be interesting to explore whether our step size lower bound can be applied to the algorithms in these papers when a certain stopping time is specified. 

%
%
%

}

We consider a continuous optimization problem of the form
\begin{equation}\label{prob.opt}
  \min_{x \in \R^{m}}\ \phi(x),
\end{equation}
where $\phi$ is possibly non-convex, (twice-)continuously differentiable with Lipschitz continuous derivatives.
Neither function values $\phi(x)$, nor gradients $\nabla \phi(x)$ are assumed to be directly computable. Instead, given any $x \in \R^{m}$, it is assumed that stochastic estimates of $\phi(x)$, $\nabla \phi(x)$, and possibly $\nabla^2 \phi(x)$  can be computed, and these estimates may possess different levels of \emph{accuracy} and \emph{reliability} depending on the particular setting of interest.

The adaptive stochastic algorithms that have been developed recently \cite{CS17, paquette2018stochastic, berahas2019global, blanchet2019convergence, gratton2018complexity, sass_arxiv, tr_witherrors, sqp, qNewton}  use these stochastic estimates to compute an $\varepsilon$-optimal point $x_\varepsilon$, which means $\phi(x_\varepsilon)-\inf_x\phi(x) \leq \varepsilon$ if $\phi$ is convex or  $\norm{\nabla \phi(x_\varepsilon)} \leq \varepsilon$ if $\phi$ is non-convex.
In the next section, we will introduce the general framework that encompasses these methods and discuss particular examples in more detail. In \Cref{sec:stoch}, we discuss the stochastic process generated by the algorithmic framework, including the stochastic step size parameter. In \Cref{sec:stepsize}, we derive a lower bound on the step size parameter which holds in high probability. In \Cref{sec:abstract}, we use this lower bound to derive abstract expected and high probability total oracle complexity for any algorithm in this framework. In \Cref{sec:apply}, we particularize these bounds to the specific examples of first-order STORM \cite{blanchet2019convergence} and SASS \cite{sass_arxiv} algorithms to bound their total sample complexity when applied to expected risk minimization. We conclude with some remarks in \Cref{sec:conclusion}.

\section{Algorithm framework and oracles} \label{sec:algo}

We now introduce and discuss an algorithmic framework for adaptive stochastic optimization in Algorithm \ref{alg:generic_stochastic}. The framework is assumed to have access to stochastic oracles, that for any given point $x$ can generate random quantities $f(x,\xi_0)\approx \phi(x)$ (via zeroth-order oracle), $g(x,\xi_1) \approx \nabla \phi(x)$  (first-order oracle) and, possibly,  $H(x,\xi_2)\approx \nabla^2 \phi(x)$ (second-order oracle). \rev{After we introduce the algorithm we will discuss the  general definition of an oracle that we use in this paper. }

\subsection{General Algorithm}\label{sec:algo1} 

 At each iteration, given $x_k$, a  stochastic local model $m_k(x_k+s) : \R^{m} \to \R{}$ of $\phi(x_k+s)$ is constructed using  $g(x,\xi_1)$ and possibly $H(x,\xi_2)$.   Using this model, a step $s_k(\alpha_k)$  is computed so that $m(x_k+s_k(\alpha_k))$ is a sufficient improvement  over $m_k(x_k)$, where $\alpha_k$ is the step size parameter, which directly or indirectly controls the step length.    The iterate $x_k$ and the trial point $x_k^+ =x_k+s_k(\alpha_k)$ are evaluated using the zeroth-order oracle  values $f(x_k,\xi_{0,k})$ and $f(x_k^+,\xi_{0,k}^+)$.  If these estimates suggest that sufficient improvement is attained, then the step is deemed \emph{successful}, $x_k^+$ is accepted as the next iterate, and the parameter $\alpha_k$ is increased up to a multiplicative factor; otherwise, the step is deemed \emph{unsuccessful}, the iterate does not change, and $\alpha_k$ is decreased by a multiplicative factor. Unlike in the deterministic case, new calls to all oracles are made at each iteration \emph{even when the iterate does not change}.

For each method we give the form of $m_k(x_k+s)$, in terms of  
$g_k=g(x_k,\xi_{1,k})$ and  $H_k=H(x_k,\xi_{2,k})$,  $s_k({\alpha_k})$ and the sufficient reduction criterion  in terms of 
 $f_k^0=f(x_k,\xi_{0,k})$ and $f_k^+=f(x_k^+,\xi^+_{0,k})$.

\begin{algorithm}
	\caption{~\textbf{Algorithmic Framework for Adaptive Stochastic Optimization}}
	\label{alg:generic_stochastic}
	
	{\bf 0. Initialization} 
	
	\vspace{0.5mm}
	
	\hspace{0.5cm} Choose $\mytheta \in (0,1)$, $\gamma\in (0, 1)$, $\alpha_{\max} \in (0,\infty)$, $x_0 \in \R^{m}$, and $\alpha_0 \in (0,\alpha_{\max}]$.  Set $k \gets 0$.
	\vspace{0.5mm}
	
	{\bf 1. Determine model and compute step}
	
	\vspace{0.5mm}
	
	\hspace{0.5cm} Construct a stochastic model $m_k$ of $\phi$ at $x_k$ using $f(x_k,\xi_{0,k})$, $g(x_k,\xi_{1,k})$, and (optionally) $H(x_k,\xi_{2,k})$ from \emph{probabilistic zeroth-, first-, and (optionally) second-order oracles}.  Compute $s_k(\alpha_k)$ such that the model reduction $m_k(x_k) - m_k(x_k+s_k(\alpha_k)) \geq 0$ is sufficiently large.
	\vspace{0.5mm}
	
	{\bf 2. Check for sufficient reduction}
	
	\vspace{0.5mm}
	
	\hspace{0.5cm} Set $x_k^+ \gets x_k + s_k(\alpha_k)$ and compute $f(x_k^+,\xi_{0,k}^+)$ as a stochastic estimate of $\phi(x_k^+)$ using a \emph{probabilistic zeroth-order oracle}.  Check if $ f(x_k,\xi_{0,k})- f(x_k^+,\xi_{0,k}^+)$ is sufficiently large (e.g., relative to the model reduction $m_k(x_k)-m_k(x_k^+)$) using a condition parameterized by~$\mytheta$.
	
	\vspace{0.5mm}
	{\bf 3. Successful iteration}
	
	\vspace{0.5mm}
	
	\hspace{0.5cm} If sufficient reduction has been attained (along with other potential requirements), then set $x_{k+1} \gets x_k^+ $ and $\alpha_{k+1} \gets \min\{\gamma^{-1}\alpha_k,\alpha_{\max}\}$.
	\vspace{0.5mm}
	
	{\bf 4. Unsuccessful iteration}
	
	\vspace{0.5mm}
	
	\hspace{0.5cm} Otherwise, set $x_{k+1} \gets x_k$ and $\alpha_{k+1} \gets \gamma \alpha_k$.
	\vspace{0.5mm}
	
	{\bf 5. Next iteration}
	
	\vspace{0.5mm}
	
	\hspace{0.5cm}  Set $k \gets k + 1$ and go to Step~1.
	\vspace{0.5mm}
\end{algorithm}

\rev{
\subsection{General Oracles}\label{sec:oracles} 
Typically in the optimization literature, an oracle is a computational procedure that provides the algorithm with some (estimate of) required information about the objective function. The oracle is endowed with some properties that the algorithm then utilizes. For example, an oracle may be assumed to produce  $\nabla \phi(x)$ exactly -- an assumption that a relevant optimization algorithm (and its analysis) will make use of and without which the algorithm may fail.  Alternatively, an oracle may produce an inexact estimate of $\nabla \phi(x)$, in which case a relevant algorithm will operate  under some specific knowledge about the properties of this estimator; e.g. that it is an unbiased estimator with variance that  is bounded as a function of $\|\nabla \phi(x)\|$, or that it is a deterministic estimator with an error bounded by some known quantity. Algorithms and their analyses differ depending on the properties of the oracles they utilize. The algorithms  analyzed in  \cite{CS17, paquette2018stochastic, berahas2019global, blanchet2019convergence, gratton2018complexity, sass_arxiv, sarc, tr_witherrors, sqp, qNewton} all fit into the framework of Algorithm \ref{alg:generic_stochastic} but under a variety of specific assumptions on the oracles that generate  function, gradient and and Hessian estimates. These assumptions are more complex than is typical in the literature, yet, they are shown to be applicable in many settings. 
For some extensive discussion on these properties, their  comparison and specific examples we refer the reader, for example,  to \cite{tr_witherrors}. Here we
summarize the main ideas.  

As presented above, the algorithmic framework described in Algorithm \ref{alg:generic_stochastic} has access to oracles that generate  random quantities $f(x,\xi_0)\approx \phi(x)$ (zeroth-order oracle), $g(x,\xi_1) \approx \nabla \phi(x)$  (first-order oracle) and, possibly,  $H(x,\xi_2)\approx \nabla^2 \phi(x)$ (second-order oracle). Each of these oracles can have an input, an output and some intrinsic properties. The input is typically the current iterate $x_k$ and the step size parameter $\alpha_k$. These quantities, together with the intrinsic properties of the oracle, define the probability space (and thus the distribution) of the random variable $\xi$.  For example, in the expected risk minimization setting, where the oracles are computed by averaging a minibatch of samples, $\xi_i$ (for $i \in \{0,1,2\}$)  is the random minibatch  used for the $i^{\mathrm{th}}$-order oracle. The probability space and the distribution of $\xi_i$ may depend on $x$ and $\alpha$, since (as we will see in \Cref{sec:apply}) the size of the minibatch may depend on these quantities. 
As another example, the first-order oracles can be computed using (randomized) finite differences \cite{berahas2019theoretical}. In this case, the probability space and the distribution of $\xi_1$ may depend on the current iterate, the step size parameter and the randomness in function value estimates. More examples of stochastic oracles can include robust gradient estimation \cite{Anscombe1960, yin2018byzantine}, SPSA \cite{spsa}, etc. 

In summary, the oracles can be implemented in a variety of ways, depending on the application,  while the algorithmic framework can be agnostic to how exactly the oracles are implemented. The algorithm operates under certain assumptions on the accuracy and reliability of the oracles. The cost of implementing an oracle to satisfy these requirements depends on the application, but it needs to be considered in an overall complexity analysis, which is what we address in this paper. The general oracles in this paper can be described at a high level as follows.
 
 \begin{definition}{\bf Stochastic $j$th-order oracle.} Given an input  $x \in \R^{n}$ and the step size parameter $\alpha$, the oracle computes $\varphi_j(x,\xi_j)$, an estimate of the $j^{\mathrm{th}}$-order derivative $\nabla^j \phi(x)$. In all the algorithms we consider, $\|\varphi_j(x,\xi_j)- \nabla^j\phi(x)\|$ is assumed to be bounded by some quantity (which will be a function of  $\alpha$), with probability at least $1-\delta_j$.
   Here, $\xi_j$ is a random variable defined on probability space $(\Omega_j, {\cal F}_j, P_j)$ whose distribution depends on the input  $x$ and $\alpha$, and $\delta_j$ is intrinsic to the oracle.
    The cost of the oracle is also a  function of $x$ and $\alpha$. 
  \end{definition}

In the next subsection, we describe how various methods fit into the general framework, and what specific requirements they have on the oracles.
} 

\subsection{Step search method}\label{sec:sass}
In the case of the step search (SS) methods in \cite{CS17, berahas2019global,  sass_arxiv} the particulars are as follows. Quantities $f_k^0$  $f_k^+$ and $g_k$ are random outputs of the stochastic oracles and $H_k$ is some positive definite matrix (e.g., the identity). 
\begin{itemize}
\item $m_k(x_k+s)=\phi(x_k)+ g_k^Ts+\frac{1}{2\alpha_k}s^TH_ks$,
\item $s_k(\alpha_k)=-\alpha_k H_k^{-1}g_k$
\item Sufficient reduction:  $f_k^0-f_k^+ \geq -\mytheta g_k^Ts_k(\alpha_k)-r$
\end{itemize}
\rev{Note that although the true function value $\phi(x_k)$ appears in the definition of $m_k$, we do not need to know or query for it to run the algorithm because the function value is just a constant that makes no difference when we minimize the model.} Here $\mytheta \in (0,1)$, and $r$ is a small positive number that compensates for the noise in the function estimates. We will discuss the choice of $r$ after we introduce conditions on the oracle outputs $f(x,\xi_0)$ and $g(x,\xi_{1})$. 
 
 \begin{itemize}
	\item \textbf{SS.0} \rev{(Step search, zeroth-order oracle)}. Given a point $x$, the oracle computes a (random) function estimate $f(x,\xi_0)$ \rev{(where $\xi_0 = \xi_0(x)$ is the randomness of the oracle, which may depend on the current point $x$)} such that 
	$$
	{\mathbb P_{\xi_0}}\left ( \abs{\phi(x) - f(x, \xi_0)}< \epsilon_f + t\right ) \geq 1-\delta_0(t), 
	$$
	for some $\epsilon_f\geq0$ and any $t>0$. 
	\item \textbf{SS.1} \rev{(Step search, first-order oracle)}. 
	Given a point $x$ and the current \emph{step size parameter} $\alpha$, the oracle computes a (random) gradient estimate $g(x, \xi_1)$ \rev{(where $\xi_1 = \xi_1(x, \alpha)$ is the randomness of the oracle, which may depend on $x$ and $\alpha$)} such that
	$${\mathbb P_{\xi_1}\left (\|g(x, \xi_1)-\nabla \phi(x)\|\leq \max \{\epsilon_g, \min\{\tau, \kappa \alpha\}\|g(x, \xi_1) \| \}\right)\geq 1-\delta_1}$$
	for some nonnegative constants $\epsilon_g$, $\kappa$, $\tau$ and $\delta_1$. 
\end{itemize}

In \cite{CS17}, $\epsilon_f=0$ and $\delta_0(t)\equiv 0$, which means that the zeroth-order oracle is exact, and $r = 0$. In  \cite{berahas2019global},  $\epsilon_f>0$ and $\delta_0(t)\equiv 0$,
which means that the zeroth-order oracle has a bounded  error with probability one, and $r = 2\epsilon_f$.
In \cite{sass_arxiv},  $\epsilon_f>0$ and $\delta_0(t)= e^{-\lambda t}$ for some 
$\lambda >0$. \rev{This means there is no restriction on the error if it is less than $\epsilon_f$}, and the tail of the error decays exponentially beyond $\epsilon_f$. In \cite{sass_arxiv}, $r > 2\,\sup_x {\E_{\xi_0}}\left [ \,\abs{\phi(x) - f(x, \xi_0)}\,\right ]$. \rev{In these works, $\theta$ is a parameter in $(0,1)$ that can be chosen by the user running the algorithm.}

In \cite{CS17}, $\epsilon_g=0$ and $\delta_1<\frac{1}{2}$.  In  \cite{berahas2019global,sass_arxiv}, $\epsilon_g>0$   and  $\delta_1$ is sufficiently small with a more complicated upper bound. 
In  \cite{CS17,berahas2019global}, $\amax$ is finite, thus $\tau$ is $\kappa\amax$. In \cite{sass_arxiv}, $\amax$ is infinity, and $\tau$ is simply assumed to be some constant intrinsic to the oracle.

\subsection{Trust-region method }\label{sec:tr}
Stochastic trust-region (TR) methods that fall into the framework of Algorithm~\ref{alg:generic_stochastic} have been developed and analyzed in \cite{gratton2018complexity, blanchet2019convergence, tr_witherrors}.  
In the case of TR algorithms,  $f_k^0$,  $f_k^+$, $g_k$,  and (possibly) $H_k$ are random outputs of the stochastic oracles, and  
 \begin{itemize}
 \item $m_k(x_k+s)=\phi(x_k)+{g_k^T}s+\frac{1}{2}s^T {H_k}s$, 
\item  $s_k({\alpha_k})=\arg\min_{s:\, \|s\|\leq {\alpha_k}}{m_k(x_k+s)}$
\item 
Sufficient reduction:  $\frac{{f_k^0-f_k^++{r}}}{m_k(x_k)-m_k(x_k+s_k({\alpha_k}))}\geq  \mytheta$
\item Additional requirement for a successful iteration: $\norm{g_k}\geq \theta_2 \alpha_k$, for some $\theta_2>0$. 
\end{itemize}

The requirements for the oracles are as follows. In the case of first-order analysis in \cite{gratton2018complexity, blanchet2019convergence, tr_witherrors}, the following first-order oracle is assumed to be available.
\begin{itemize}
			\item \textbf{TR1.1} \rev{(First-order trust-region, first-order oracle)}. Given a point $x$ and the current \emph{trust-region radius} $\alpha$, the oracle computes a gradient estimate $g(x, \xi_1)$ \rev{(where $\xi_1 = \xi_1(x, \alpha)$ is the randomness of the oracle, which can depend on $x$ and $\alpha$)} such that
		$${\mathbb{P}_{\xi_1}\left (\|g(x, \xi_1)-\nabla \phi(x)\|\leq   \epsilon_g+\kappa_{eg}\alpha\right)\geq 1-\delta_1}.$$
		Here, $\kappa_{eg}$ and $\delta_1$ are nonnegative constants. 
\end{itemize}
In the second-order analysis, the following first- and second-order oracles are used:
\begin{itemize}
			\item \textbf{TR2.1} \rev{(Second-order trust-region, first-order oracle)}. Given a point $x$ and the current \emph{trust-region radius} $\alpha$, the oracle computes a gradient estimate $g(x, \xi_1)$ \rev{(where $\xi_1 = \xi_1(x, \alpha)$ is the randomness of the oracle, which can depend on $x$ and $\alpha$)} such that
		$${\mathbb{P}_{\xi_1}\left (\|g(x, \xi_1)-\nabla \phi(x)\|\leq   \epsilon_g+\kappa_{eg}\alpha^2\right)\geq 1-\delta_1}.$$
		Here, $\kappa_{eg}$ and $\delta_1$ are nonnegative constants. 
					\item \textbf{TR2.2} \rev{(Second-order trust-region, second-order oracle)}. Given a point $x$ and the current \emph{trust-region radius} $\alpha$, the oracle computes a Hessian estimate $H(x, \xi_2)$ \rev{(where $\xi_2 = \xi_2(x, \alpha)$ is the randomness of the oracle, which can depend on $x$ and $\alpha$)} such that
		$${\mathbb{P}_{\xi_2}\left (\|H(x, \xi_2)-\nabla \phi(x)\|\leq \epsilon_h+ \kappa_{eh}\alpha\right)\geq 1-\delta_2}.$$
		Here, $\kappa_{eh}$ and $\delta_2$ are nonnegative constants. 
\end{itemize}
$\epsilon_h$ and $\epsilon_g$ are assumed to equal $0$ in  \cite{gratton2018complexity, blanchet2019convergence} but are allowed to be positive in \cite{tr_witherrors}. 

In terms of the zeroth-order oracles, the three works make different assumptions. Specifically, in \cite{gratton2018complexity}, as in \cite{CS17}, the zeroth-order oracle is assumed to be exact. 
In \cite{tr_witherrors} the zeroth-order oracle is the same as in \cite{sass_arxiv}, and $r > 2\epsilon_f + \frac{2}{\lambda}\log 4$. For the first-order analysis in \cite{blanchet2019convergence}, however, the zeroth-order oracle is as follows (and $r = 0$).
\begin{itemize}
	\item \textbf{TR1.0} \rev{(First-order trust-region, zeroth-order oracle)}. Given a point $x$ and the current \emph{trust-region radius} $\alpha$, the oracle computes a function estimate $f(x, \xi_0)$ \rev{(where $\xi_0 = \xi_0(x, \alpha)$ is the randomness of the oracle, which can depend on $x$ and $\alpha$)} such that
	$$\P_{\xi_0}\left(\abs{f(x, \xi_0) - \phi(x)} \leq \kappa_{ef} \alpha^2\right) \geq 1 - \delta_0,$$
	where $\kappa_{ef}$ and $\delta_0$ are some nonnegative constants.
\end{itemize}
For the second-order analysis in \cite{blanchet2019convergence}, the zeroth-order oracle requirements are tighter. 
\begin{itemize}
	\item \textbf{TR2.0} \rev{(Second-order trust-region, zeroth-order oracle)}. Given a point $x$ and the current \emph{trust-region radius} $\alpha$, the oracle computes a function estimate $f(x, \xi_0)$ \rev{(where $\xi_0 = \xi_0(x, \alpha)$ is the randomness of the oracle, which can depend on $x$ and $\alpha$)}  such that
	$$\P_{\xi_0}\left(\abs{f(x, \xi_0) - \phi(x)} \leq \kappa_{ef1} \alpha^3\right) \geq 1 - \delta_0$$
	and 
	$$\E_{\xi_0}\left[\abs{f(x, \xi_0) - \phi(x)}\right ] \leq \kappa_{ef2} \alpha^3 $$
	where $\kappa_{ef1}$, $\kappa_{ef2}$ and $\delta_0$ are some nonnegative constants.
\end{itemize}

\subsection{Cubicly regularized Newton method}\label{sec:sarc}
The cubicly regularized (CR) Newton methods in \cite{CS17, sarc} also fit the framework of Algorithm~\ref{alg:generic_stochastic} with 
\begin{itemize}
\item $m_k(x_k+s)=\phi(x_k)+{g_k^T}s+\frac{1}{2}s^T {H_k}s+\frac{1}{3{\alpha_k}}\|s\|^3$,
\item  $s_k({\alpha_k})=\arg\min_{s}{ m_k(x_k+s)}, \hfill \refstepcounter{equation}(\theequation)\label{sarc_step}$
\item 
Sufficient reduction:  $\frac{{f_k^0-f_k^+ \rev{+r}}}{m_k(x_k)-m_k(x_k+s_k({\alpha_k}))}\geq  \mytheta$.
\end{itemize}
\rev{In \cite{CS17}, the zeroth-order oracle is assumed to be exact, that is $f_k^0=\phi(x_k)$ and $f_k^+=\phi(x_k+s_k({\alpha_k}))$, and $r=0$.
In \cite{sarc} the zeroth-order oracle and the choice of $r$ are the same as in \cite{sass_arxiv}.
In \cite{CS17} and \cite{sarc}, a version of the following first- and second-order oracles are used. }
\rev{For specific implementation details, please refer to \cite{CS17, sarc}. }
\begin{itemize}
	\item \textbf{CR.1} \rev{(Cubicly regularized Newton, first-order oracle)}. Given a point $x$ and the current parameter $\alpha$, the oracle computes a gradient estimate $g(x, \xi_1)$ \rev{(where $\xi_1 = \xi_1(x, \alpha)$ is the randomness of the oracle, which can depend on $x$ and $\alpha$) such that
	$$\mathbb{P}_{\xi_1}\left (\|\nabla \phi(x)-g(x,\xi_1))\|\leq  \kappa_{eg} \max\left\{ \alpha,\|s\|^2\right\}\right) \geq 1 - \delta_1,$$
	
	where $\kappa_{eg}$ and $\delta_1$ are nonnegative  constants, and $s$ is defined in \eqref{sarc_step}.}
		\item \textbf{CR.2} \rev{(Cubicly regularized Newton, second-order oracle)}. Given a point $x$, and the current parameter $\alpha$, the oracle computes a Hessian estimate $H(x, \xi_2)$ \rev{(where $\xi_2 = \xi_2(x, \alpha)$ is the randomness of the oracle, which can depend on $x$ and $\alpha$) such that 
	$$\mathbb{P}_{\xi_2}\left (\|(\nabla^2 \phi(x)-{H(x, \xi_2)})s\|\leq  \kappa_{eh}
		\max\left \{\alpha, \|s\|^2\right \}\right) \geq 1 - \delta_2,$$
			
	where $\kappa_{eh}$ and $\delta_2$ are nononegative  constants, and $s$ is defined in \eqref{sarc_step}. }
\end{itemize}
%
%
%
%

It is apparent that all of the algorithms that we discussed above rely on oracles whose accuracy requirements change adaptively with $\alpha$. It is also clear that for many settings, \rev{a higher accuracy requirement leads to a higher oracle complexity}. For example, if a stochastic oracle is delivered via sample averaging, then more samples are needed to provide a higher accuracy.  Therefore, to bound the total oracle complexity of the algorithm, we need to bound the accuracy requirement over the iterations, or equivalently provide a lower bound for the parameter $\alpha$.

\subsection{Notions of the stochastic process}
\label{sec:stoch}
When applied to problem \eqref{prob.opt}, Algorithm \ref{alg:generic_stochastic}  generates a stochastic process (with respect to the randomness underlying the stochastic oracles). 
Specifically, let $(X_k)_{k\geq 0}$ be the random iterates with realizations $x_k$, let $(G_k)_{k\geq 0}$ be the gradient estimates with realizations $g_k$, and let $(\mathcal{A}_k)_{k \geq 0}$ be the step size parameter values with realizations $\alpha_k$. The prior works that analyze different methods \rev{belonging to the framework of Algorithm~\ref{alg:generic_stochastic}}  define this stochastic process rigorously, with appropriate filtrations. Here for brevity, we will omit those details, as we do not use them in the analysis. 
We now define a stopping time for the process.
\begin{defn}[Stopping time]
	For $\varepsilon > 0$, let $T_\varepsilon$ be the first time such that a specified optimality condition is satisfied. For all the settings considered in this paper,
	${T_\varepsilon}=\min\{k: \norm{\nabla \phi(x_k)} \leq \varepsilon\}$ if $\phi$ is non-convex, and ${T_\varepsilon}=\min\{k: \phi(x_k) - \inf_x \phi(x)  \leq \varepsilon\}$ if $\phi$ is strongly convex. We will refer to $T_\varepsilon$ as the \emph{stopping time} of the algorithm.
\end{defn}
 The following property is crucial in the analysis of algorithms in the framework of \Cref{alg:generic_stochastic}.

\begin{assumption}
	[Properties of the stochastic process generated by the adaptive stochastic algorithm]
	\label{ass:alg_behave}
	The random sequence of parameters $\mathcal{A}_k$ generated by the algorithm satisfies the following: 
	\begin{itemize}
		\item[(i)] For all $k$, $\mathcal{A}_k \in \{\gamma \mathcal{A}_{k-1}, \min\{\amax, \gamma^{-1}\mathcal{A}_{k-1}\}\}$, and
		\item[(ii)] There exist constants $\bar\alpha > 0$, and $p>\frac 12$, such that \rev{for all iterations $k$,
		$$\P(\mathcal{A}_{k+1} = \gamma^{-1} \mathcal{A}_{k} \mid \mathcal{F}_{k}, \, k < T_\varepsilon, \, \mathcal{A}_k  \leq  \bar \alpha ) \geq p.$$}
		Here, $\mathcal{F}_k$ denotes the filtration generated by the algorithm up to iteration $k$.
	\end{itemize}       

\end{assumption}
 The algorithms in  \cite{CS17, paquette2018stochastic, berahas2019global, blanchet2019convergence, gratton2018complexity,sass_arxiv,tr_witherrors} all satisfy Assumption \ref{ass:alg_behave}, under appropriate lower bounds on the oracle probabilities  $\delta_0, \delta_1$ (and $\delta_2$). \rev{Also, without loss of generality, we will assume that $\alpha_0 \geq \bar{\alpha}$, since if $\alpha_0 < \bar{\alpha}$ we can simply define $\bar{\alpha}$ to be $\alpha_0$, as $\alpha_0$ is a constant.}
In the next section, under Assumption  \ref{ass:alg_behave}, we derive a high probability lower bound on $\alpha_k$ as a function of the number of iterations $n$, $\bar{\alpha}$, $p$, and $\gamma$. 

Throughout the remainder of this paper, we will use $q$ to denote $1-p$.

\section{High probability lower bound for the step size parameter}
\label{sec:stepsize}

The following theorem provides a high probability lower bound for $\alpha_k$.

\begin{theorem}
\label{thm:stepsize_bound}
	Suppose Assumption \ref{ass:alg_behave} holds for Algorithm \ref{alg:generic_stochastic}. For any positive integer $n$, any $\omega  > 0$, with probability at least  $1 - n^{-\omega } - cn^{-(1+\omega )}$, we have 
	$$\text{either  $T_\eps < n$ ~or~ $\min_{1 \leq k \leq n} \alpha_k  
	\geq \rev{\alpha^*(n)} :=  \bar{\alpha}\gamma  \gamma^{{(1+\omega)\log_{1/2q} n}}
=\bar{\alpha}\gamma   n^{-{(1+\omega)\log_{1/2q} 1/\gamma}}$},$$ 	where $c = \frac{2\sqrt{pq}}{(1-2\sqrt{pq})^2}$ and $q = 1-p$.
	
\end{theorem}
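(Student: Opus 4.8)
The plan is to translate the multiplicative step-size dynamics into an additive random walk on exponents and then bound a first-passage probability by coupling with a biased one-sided walk. Since $\alpha_{k+1}\in\{\gamma\alpha_k,\min\{\amax,\gamma^{-1}\alpha_k\}\}$ and $\alpha_0\geq\bar\alpha$ without loss of generality, I would track the integer level $\ell_k$ defined by $\alpha_k=\bar\alpha\gamma^{\ell_k}$ (ignoring the upper cap $\amax$, which can only help this lower-bound direction). Writing $j^\ast:=(1+\omega)\log_{1/2q}n$, the elementary identity $(2q)^{j^\ast}=(1/2q)^{-j^\ast}=n^{-(1+\omega)}$ is the arithmetic backbone of the whole estimate. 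Because $\gamma<1$, the event $\min_{1\leq k\leq n}\alpha_k<\alpha^\ast(n)=\bar\alpha\gamma^{\,1+j^\ast}$ is exactly $\{\max_{1\leq k\leq n}\ell_k\geq a\}$ for an integer level $a:=\lceil j^\ast\rceil+1$; the leading factor $\gamma$ in $\alpha^\ast(n)$ is precisely the slack needed to round the real exponent $j^\ast$ up to an integer first-passage level while keeping $a>j^\ast$, so that $(2q)^a\leq(2q)^{j^\ast}$. Intersecting with $\{T_\eps\geq n\}$ is what makes Assumption~\ref{ass:alg_behave}(ii) applicable: for every $k<n$ we then have $k<T_\eps$, so whenever $\alpha_k\leq\bar\alpha$ (i.e. $\ell_k\geq0$) the conditional probability that $\ell$ \emph{decreases} is at least $p$.

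Next I would set up an adaptive coupling with a one-sided random walk $R_k$ whose increments are $+1$ (level up, $\alpha$ down) with probability $q$ and $-1$ with probability $p$, reflected at $0$. This must be a conditional/martingale-type coupling rather than an i.i.d. one: at each step, on $\{\ell_k\geq0,\,k<T_\eps\}$ the bound $\P(\ell_{k+1}=\ell_k-1\mid\mathcal{F}_k)\geq p$ lets me build $R$ on the same probability space so that $\max\{0,\ell_k\}\leq R_k$ pathwise. The region $\ell_k<0$ (where $\alpha_k>\bar\alpha$ and no bias is guaranteed) is harmless: every visit to a deep level must cross level $0$, so I reset the comparison each time $\ell$ returns to $0$ and only ever control the excursions of $R$ above the boundary.

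It then remains to bound $\P(\max_{k\leq n}R_k\geq a)$. I would decompose by excursions of $R$ away from $0$: there are at most $n$ such excursions in $n$ steps, and for a biased walk a single excursion reaching height $a$ before returning to $0$ has probability at most $(q/p)^{a}$, which I relax to $(2q)^{a}$ using the inequality $q/p=q/(1-q)\leq 2q$ (valid since $q<\tfrac12$). A union bound over the $\leq n$ excursion start times then yields the dominant term $n\,(2q)^{j^\ast}=n^{-\omega}$. The sharper correction $c\,n^{-(1+\omega)}$ comes from a length-sensitive count: using the reflection/ballot estimate, the probability that an excursion reaches height $a$ in exactly $m$ steps is at most $\binom{m}{(m+a)/2}q^{(m+a)/2}p^{(m-a)/2}\leq(q/p)^{a/2}(2\sqrt{pq})^{m}$, and here $\sqrt{1-4pq}=p-q$ together with $\sum_{m\geq1}m\,x^{m}=x/(1-x)^2$ at $x=2\sqrt{pq}$ produces exactly $c=\frac{2\sqrt{pq}}{(1-2\sqrt{pq})^2}$. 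Summing the per-length probabilities over $m$ and weighting by the available start times assembles the bound into $(n+c)(2q)^{j^\ast}=n^{-\omega}+c\,n^{-(1+\omega)}$, as required.

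I expect the main obstacle to be making the excursion/first-passage bookkeeping genuinely rigorous while producing \emph{exactly} the constants $(2q)^{j^\ast}$ and $c$, rather than merely the correct order. Two points require care: (i) the coupling is only valid conditionally and only on the event $\{\ell_k\geq0,\,k<T_\eps\}$, so I must verify that the reflecting reset does not introduce dependence that breaks the union bound; and (ii) the random number of excursions and the constraint that each descent must complete within the horizon $n$ must be handled so that the generating-function identity for $c$ emerges cleanly instead of a looser summable series. The translation to levels, the identity $(2q)^{j^\ast}=n^{-(1+\omega)}$, and the reflection bound $(q/p)^{a/2}(2\sqrt{pq})^{m}$ are all routine; the delicate step is combining them to land precisely on $1-n^{-\omega}-c\,n^{-(1+\omega)}$.
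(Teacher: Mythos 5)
Your proposal is correct in substance, and its first half coincides with the paper's argument: the translation to integer exponent levels, the coupling on $\{\ell_k \geq 0,\ k < T_\eps\}$ using the $\mathcal{F}_k$-measurability of $\{k < T_\eps\}$, and the trivial handling of the region $\ell_k < 0$ are exactly the paper's coupling lemma (the paper's only bookkeeping difference is that it extends the exponent process past $T_\eps$ as a drifted walk, rather than restricting attention to the event $\{T_\eps \geq n\}$; both devices are legitimate and yield the same domination $R_k \geq \max\{0,\ell_k\}$ up to $\min\{n, T_\eps\}$). Where you genuinely diverge is in bounding $\P\bigl(\max_{k \leq n} R_k \geq a\bigr)$. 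The paper modifies the walk into a Markov chain on $\{0,1,\dots,l\}$ that holds at $l$, invokes Feller's explicit diagonalization of its transition matrix to bound $P^m_{0,l}$, and sums over $m = l,\dots,n$; you instead use an excursion decomposition with the gambler's-ruin estimate and a union bound over the at most $n$ excursion starts. Your route is more elementary (no spectral formula needed) and, carried out straightforwardly, gives $\P\bigl(\max_{k\leq n} R_k \geq a\bigr) \leq n\,(q/p)^a \leq n\,(2q)^a \leq n^{-\omega}$, which already implies the theorem — indeed with the slightly better failure probability $n^{-\omega}$ in place of $n^{-\omega} + c\,n^{-(1+\omega)}$.

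This is also where your one real misstep lies: the "length-sensitive count" you propose in order to reproduce the exact constant $c$ is both unnecessary and, as written, does not assemble. The two-term form of the paper's bound is an artifact of its spectral formula: there, $c\,(2q)^l$ is the non-stationarity correction, obtained by summing $\frac{2p}{1-2\sqrt{pq}}\,(q/p)^{(l+1)/2}(2\sqrt{pq})^m$ over $m \geq l$, and it is essential that the geometric sum starts at $m = l$, since $(q/p)^{(l+1)/2}(2\sqrt{pq})^{l}$ is what produces the factor $(2q)^l$. In your version, the ballot-type prefactor $(q/p)^{a/2}$ is only the square root of the $(q/p)^a$ you need; your generating-function identity $\sum_{m\geq 1} m x^m = x/(1-x)^2$ sums from $m = 1$ rather than $m \geq a$, thereby discarding the factor $(2\sqrt{pq})^a$ that would convert $(q/p)^{a/2}$ into $(2q)^a$; and the weight $m$ does not correspond to a valid count of available start times. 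So the claimed assembly into $(n + c)(2q)^{j^\ast}$ does not follow from the stated pieces — summing your per-length bound over $m \geq a$ with an $n$-fold union bound would instead give an extra term of order $n(2q)^a/(1-2\sqrt{pq})$, of the same order as your main term. Since the theorem asserts only an upper bound on the failure probability, the fix is simply to delete this bookkeeping and keep the clean union-bound estimate $n(q/p)^a \leq n^{-\omega}$; there is no need to land precisely on $1 - n^{-\omega} - c\,n^{-(1+\omega)}$.
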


The proof of this theorem involves two steps. First, in \Cref{sec:randwalk}, we show that for $n<T_\varepsilon$, the sequence of step size parameters $\mathcal{A}_k$ generated by the algorithm can be coupled with a random walk on the non-negative integers. This reduces the problem to that of bounding the maximum value of a one-sided random walk in the first $n$ steps. We then derive  a high probability upper bound on this maximum value in \Cref{sec:max_rand_walk}.

Before moving to its proof, we illustrate the theorem using some plots and comment on some implications of the theorem.

\textbf{Illustration of \Cref{thm:stepsize_bound}.} \Cref{fig:rand_walk} illustrates the high probability bound provided by \Cref{thm:stepsize_bound}. The solid curves depict the lower bounds given by the theorem for $\bar{\alpha} = 1, \omega = 1$, $p = 0.8$, and for varying values of $\gamma$. In comparison, the dotted lines correspond to one-sided random walks
$\mathcal{Z}_k$ that start at $\bar{\alpha} = 1$. 
At each step, $\mathcal{Z}_{k+1} = \gamma \mathcal{Z}_k$ with probability $1-p$, and $\mathcal{Z}_{k+1} = \min\{1, \gamma^{-1}\mathcal{Z}_k\}$ with probability $p$. The proof of \Cref{thm:stepsize_bound} shown later implies that there is a coupling between the sequence of parameters $\mathcal{A}_k$ generated by the algorithm and $\mathcal{Z}_k$, such that $\mathcal{A}_k \geq \mathcal{Z}_k$, in other words, the sequence of parameters $\mathcal{A}_k$ generated by the algorithm \emph{stochastically dominates} $\mathcal{Z}_k$. 


\begin{figure}[!ht]
	\begin{subfigure}[t]{0.45\textwidth}
		\vskip 0pt
			\centering
		\includegraphics[scale=0.42]{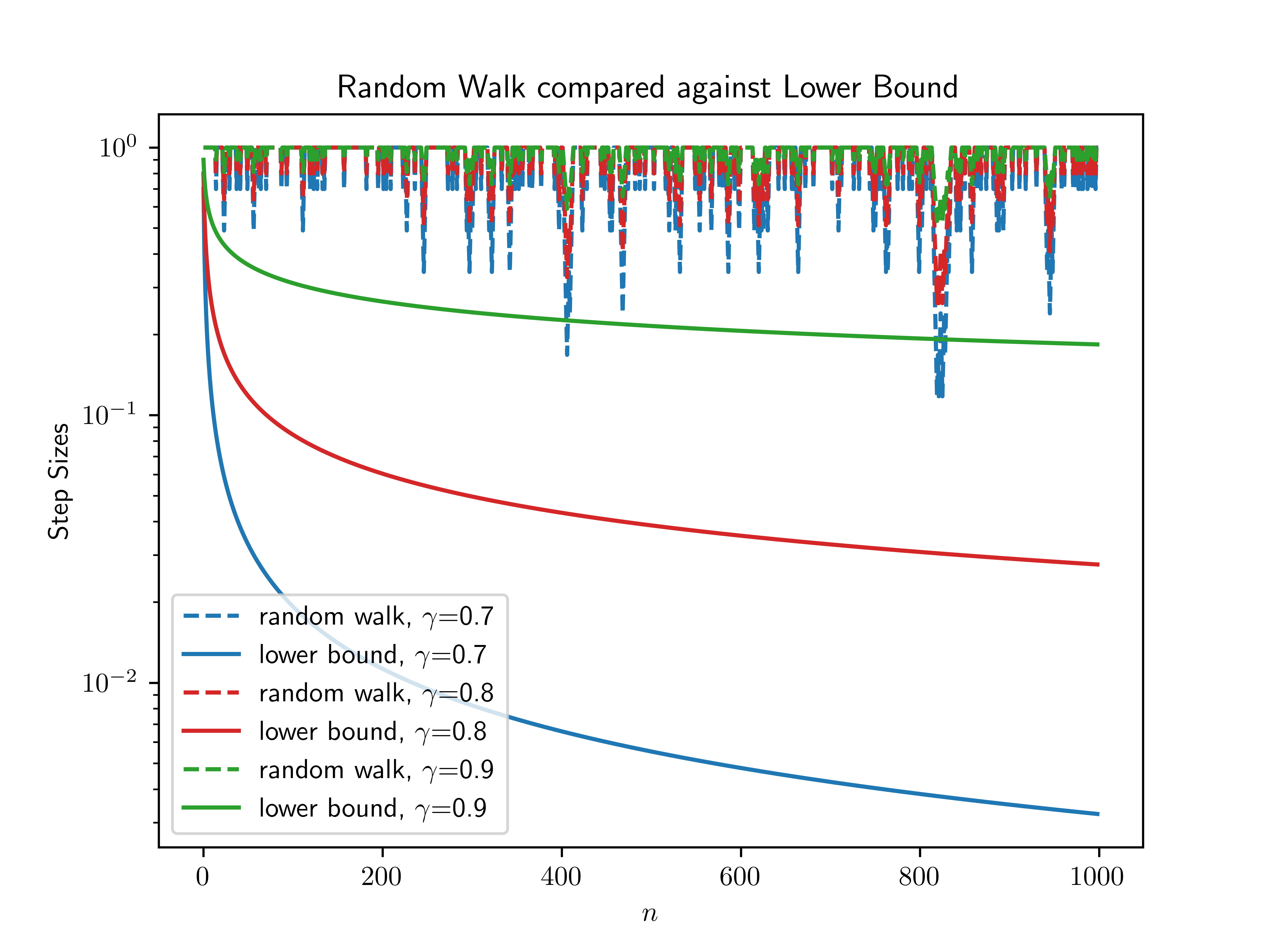}
		\caption{Comparing the random walk trajectory with the theoretical lower bound for various values of $\gamma$.}
		\label{fig:rand_walk}
	\end{subfigure}
    ~
    \begin{subfigure}[t]{0.45\textwidth}
    	\vskip 0pt
			\centering
		\includegraphics[scale=0.42]{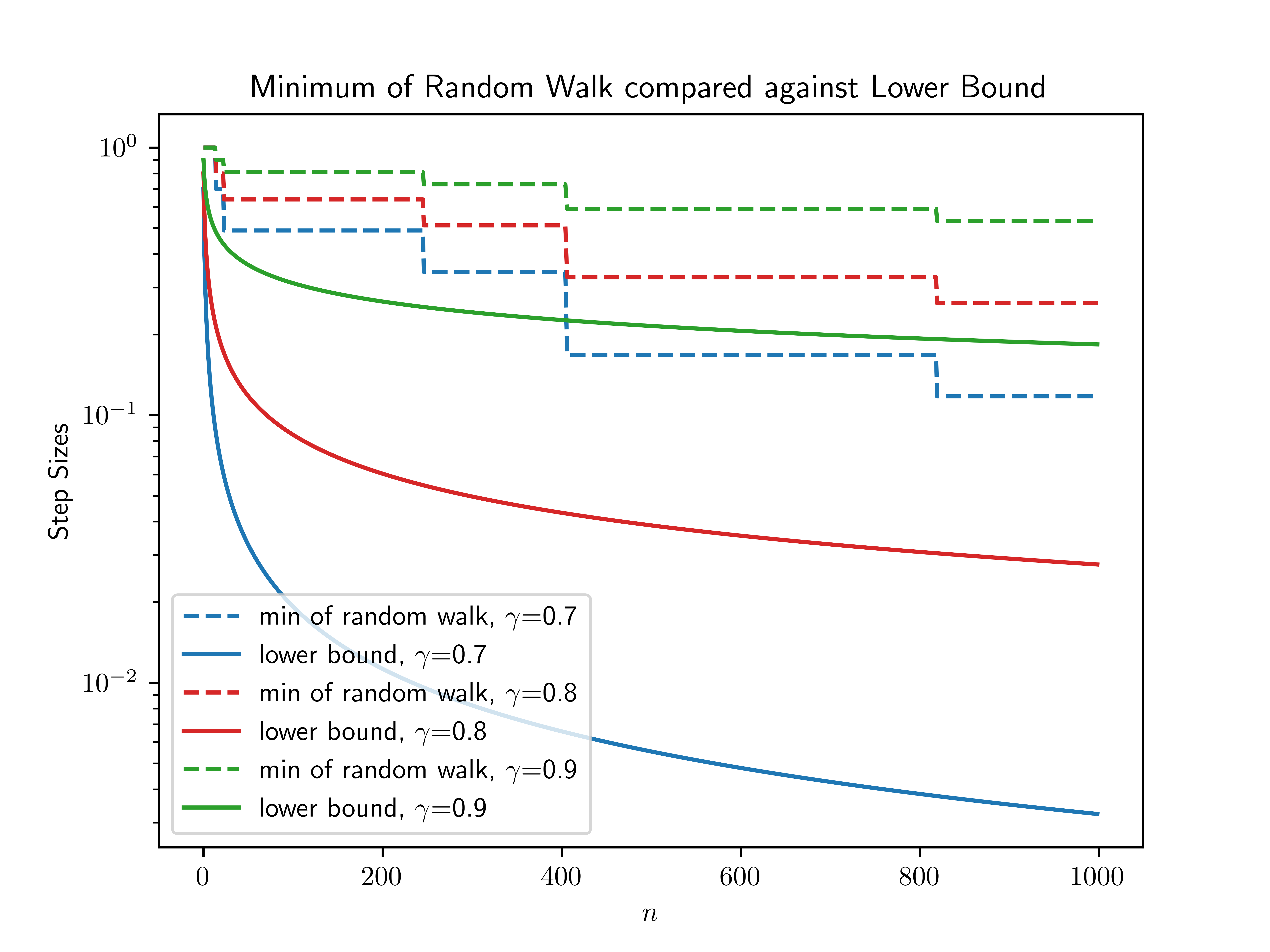}
		\caption{Comparing the \emph{minimum value attained so far by the random walk curves in \Cref{fig:rand_walk}} with the theoretical lower bounds (the same as in \Cref{fig:rand_walk}).}
         \label{fig:min_rand_walk}
    \end{subfigure}
\caption{Illustration of \Cref{thm:stepsize_bound}.}
\end{figure}

\textbf{Remarks on \Cref{thm:stepsize_bound}.} 
\begin{enumerate}
	\item
	For fixed $n, \gamma$, and $\bar\alpha$, the lower bound is a function of $p$. It increases as $p$ increases. Specifically, the exponent of $n$ changes with $p$, and the exponent goes to $0$ as $p$ goes to $1$.
	Hence as $p$ goes to 1, this lower bound simplifies to $\bar{\alpha}\gamma$, which matches the lower bound in the deterministic case.
	
	\item When $p$ is close to $1$ (i.e. when the stochastic oracles are highly reliable), this lower bound decreases slowly as a function of $n$, since the exponent of $n$ is close to $0$. Alternatively, when the stochastic oracles are not highly reliable,  increasing the value of $\gamma$ allows the algorithm to maintain a slow decrease of the step size.
	
	\item Enlarging $\gamma$ as $p$ decreases makes intuitive sense for the algorithm.
	When $p$ is large, an unsuccessful step is more likely to be caused by the step size being too large rather than the failure of the oracles to deliver the desired accuracy. On the other hand, when $p$ is small,
	unsuccessful iterations are likely to occur even when the step size parameter is already small.
	Thus in the latter case, larger $\gamma$ values help avoid an erroneous rapid decrease of the step size parameter.
	
	\item  If we  choose 
	$\gamma=\left(\frac {1} {2q}\right)^{-\frac 1 4}$ \rev{and $\omega = 1$}, 
	then the minimum step size is lower bounded by $\bar{\alpha}\gamma n^{-\frac 1 2}$ with high probability. This coincides with the typical choice of the step size decay schemes for the stochastic gradient method applied to non-convex functions.
\end{enumerate}

The theorem implies that we can even bound $\alpha$ by a \emph{constant} times $\bar{\alpha}$ with high probability, provided we set $\gamma$ as a function of $n$.

\begin{corollary}\label{cor:stepsize_bound}
Let Assumption \ref{ass:alg_behave} hold for Algorithm \ref{alg:generic_stochastic}, then for any positive integer $n$, any $\omega  > 0$, and any $\beta < \frac12$, if 
	$$\gamma \geq \max\left\{\frac12, \left(\frac{1}{2q}\right)^{\frac{\log(2\beta)}{(1+\omega)\log n} } \right\},$$
	then with probability at least  $1 - n^{-\omega } - cn^{-(1+\omega )}$, where $c = \frac{2\sqrt{pq}}{(1-2\sqrt{pq})^2}$,we have 
	$$\text{either $T_\eps < n$ or $\min_{1 \leq k \leq n} \alpha_k  \geq \beta \bar{\alpha}$}.$$
\end{corollary}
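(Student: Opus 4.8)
The plan is to obtain the corollary as an essentially deterministic tightening of \Cref{thm:stepsize_bound}, so that all of the probabilistic work is inherited and only an inequality on $\gamma$ remains to be checked. \Cref{thm:stepsize_bound} already guarantees that, with probability at least $1 - n^{-\omega} - cn^{-(1+\omega)}$, either $T_\varepsilon < n$ or $\min_{1\le k\le n}\alpha_k \ge \alpha^*(n)$, where $\alpha^*(n) = \bar\alpha\gamma\, n^{-(1+\omega)\log_{1/2q}(1/\gamma)}$. Since the failure probability asserted in the corollary is exactly the same, it suffices to show that the hypothesis on $\gamma$ implies $\alpha^*(n) \ge \beta\bar\alpha$: once this holds, the event $\{T_\varepsilon < n\}\cup\{\min_k \alpha_k \ge \alpha^*(n)\}$ is contained in $\{T_\varepsilon < n\}\cup\{\min_k \alpha_k \ge \beta\bar\alpha\}$, so the latter inherits the high-probability bound.

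First I would rewrite $\alpha^*(n)$ so that the two terms of the $\max$ in the hypothesis act on two separate factors. Using the identity recorded in \Cref{thm:stepsize_bound}, namely $n^{-(1+\omega)\log_{1/2q}(1/\gamma)} = \gamma^{(1+\omega)\log_{1/2q} n}$, and writing $A := (1+\omega)\log n$ and $B := \log(1/2q)$ (both strictly positive, since $q = 1-p < \tfrac12$ by \Cref{ass:alg_behave}(ii) and $n \ge 2$), I would record the factorization
$$\frac{\alpha^*(n)}{\bar\alpha} = \gamma\cdot\gamma^{(1+\omega)\log_{1/2q} n} = \gamma\cdot\gamma^{A/B}.$$

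Next I would bound the two factors using the two terms inside the $\max$. The first term gives $\gamma \ge \tfrac12$ directly. For the second, the hypothesis $\gamma \ge (1/2q)^{\log(2\beta)/((1+\omega)\log n)}$ is, after converting the base via $B$, exactly $\gamma \ge (2\beta)^{B/A}$; raising both sides to the positive power $A/B$ preserves the inequality and yields $\gamma^{A/B} \ge 2\beta$. Multiplying the two bounds gives
$$\frac{\alpha^*(n)}{\bar\alpha} = \gamma\cdot\gamma^{A/B} \ge \tfrac12\cdot 2\beta = \beta,$$
which is precisely $\alpha^*(n) \ge \beta\bar\alpha$, completing the reduction.

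I do not anticipate a real obstacle, as this is a deterministic consequence of \Cref{thm:stepsize_bound}; the only care needed is the logarithm-base bookkeeping when rewriting the hypothesis as $\gamma \ge (2\beta)^{B/A}$, and checking that $\beta < \tfrac12$ (so $\log(2\beta) < 0$) together with $B > 0$ keeps the exponent of $\gamma$ negative and every quantity well defined. It is worth explaining in the final write-up where the factor $2$ comes from: it is exactly the slack created by lower-bounding the leading factor $\gamma$ by $\tfrac12$, which is what permits the remaining condition to be stated in the clean closed form involving $2\beta$ rather than the transcendental exact threshold $\gamma \ge \beta^{B/(A+B)}$.
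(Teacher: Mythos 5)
Your proposal is correct and follows exactly the paper's route: the paper proves the corollary in one line by substituting the specified $\gamma$ into \Cref{thm:stepsize_bound}, and your argument is precisely that substitution carried out in detail (the base-change showing the hypothesis is $\gamma \ge (2\beta)^{B/A}$, hence $\gamma^{A/B}\ge 2\beta$, combined with $\gamma\ge\tfrac12$ to get $\alpha^*(n)\ge\beta\bar\alpha$, plus the event containment). Your closing observation about where the factor $2$ comes from is a nice bit of bookkeeping the paper leaves implicit, but the approach is the same.
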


\begin{proof}
This follows from \Cref{thm:stepsize_bound} by substituting in the specified value of $\gamma$.
\end{proof}
In the remainder of this section, we prove \Cref{thm:stepsize_bound} in two steps.
  

\subsection{Step 1: reduction to random walk}
\label{sec:randwalk}
	
	We will use a coupling argument to obtain the reduction to a random walk. 

%

Let $\{\A_k\}_{k=0}^\infty$ denote the random sequence of parameter values (whose realization is $\{\alpha_k\}_{k=0}^\infty$), for  Algorithm \ref{alg:generic_stochastic}.  Let us assume, WLOG, that $\A_0 = \gamma^j \bar{\alpha}$, for some integer $j\leq0$.  (Recall here that $0 <\gamma < 1$.) Then we observe that $\mathcal{A}_k = \gamma^{\bar{Y}_k}\bar{\alpha}$,   where  $\{\bar{Y}_k\}_{k=0}^\infty$ is a random sequence of  integers, with $\bar{Y}_0 =j\leq 0$, which increases by one on every unsuccessful step, and decreases by one on every successful step. Moreover, by 
Assumption  \ref{ass:alg_behave}, whenever $k < T_\eps$ and $\bar{Y}_k \geq 0$, the probability that it decreases by one is at least $p$. Define $Y_k$ as follows:
\begin{equation}
\label{eq:Yk}
Y_k = \bar{Y}_k \quad \text{if $k \leq T_\eps$},
\qquad
Y_k = 
\begin{cases}
Y_{k-1} - 1 &\text{w.p. $p$} \\
Y_{k-1} + 1 &\text{w.p. $1-p$} 
\end{cases}
\quad
\text{if $k > T_\eps$.} 
\end{equation}
In other words, $Y_k$ follows the algorithm until $T_\eps$, and then behaves like a random walk with downward drift $p$ after $T_\eps$.
We now couple  $\{Y_k\}_{k=0}^\infty$  with a random walk $\{Z_k\}_{k=0}^\infty$ which stochastically dominates $Y_k$.

Consider the following one-sided random walk $\{Z_k\}_{k=0}^\infty$, defined  on the non-negative integers. 
\begin{equation}
\label{eq:randwalk_update}
Z_0 = 0, \qquad Z_{k+1} = \begin{cases}
	Z_k +1, &\text{w.p. $1-p$,}\\
	Z_k-1, &\text{w.p. $p$, if $Z_k \geq 1$,}\\
	0, &\text{w.p. $p$, if $Z_k = 0$.}
\end{cases}
\end{equation}


\begin{lemma}
	\label{lem:coupling_new}
	There exists a coupling between $Z_k$  and $Y_k$, where $Z_k$ stochastically dominates $Y_k$. 
\end{lemma}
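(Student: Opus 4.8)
The plan is to build $Y_k$ and $Z_k$ on a common probability space by adjoining $Z$ one step at a time to the given process $Y$, and to maintain the invariant $Z_k \geq Y_k$ (together with $Z_k \geq 0$) for every $k$ by induction. The base case holds since $Z_0 = 0 \geq j = Y_0$. For the inductive step I would express the reflected walk through its intended increment: let $\xi_{k+1} \in \{-1,+1\}$ equal $+1$ with probability $1-p$ and $-1$ with probability $p$, and set $Z_{k+1} = (Z_k + \xi_{k+1})^+$, which reproduces exactly the transition in \eqref{eq:randwalk_update}, including the reflection at $0$. The only freedom left is how to couple the draw of $\xi_{k+1}$ with the (given, and only partially specified) move of $Y$ defined in \eqref{eq:Yk}.

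The key structural observation driving the coupling is that the drift guarantee from Assumption~\ref{ass:alg_behave}(ii) is available exactly when it is needed. Writing $a_k := \P(Y_{k+1} = Y_k - 1 \mid \mathcal{F}_k)$ for the downward probability of $Y$, I would first record that $a_k \geq p$ whenever $Y_k \geq 0$: for $k < T_\eps$ this is Assumption~\ref{ass:alg_behave}(ii), since the event $\mathcal{A}_k \leq \bar{\alpha}$ is equivalent to $\bar{Y}_k \geq 0$ (as $\mathcal{A}_k = \gamma^{\bar{Y}_k}\bar{\alpha}$ and $0<\gamma<1$) and a successful step corresponds to a downward move of $Y$; for $k \geq T_\eps$ the process $Y$ is by construction the free random walk with downward probability exactly $p$. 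The danger for the invariant is a simultaneous ``$Y$ up, $Z$ down'' transition when the gap $D_k := Z_k - Y_k$ is too small, and I would rule this out by a short case analysis on $D_k$.

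Concretely, when $D_k \geq 2$ I would draw $\xi_{k+1}$ independently of $Y$'s move, since then $Z_{k+1} \geq Z_k - 1 \geq Y_k + 1 \geq Y_{k+1}$ regardless. When $D_k \leq 1$, I would couple $\xi_{k+1}$ to $Y$'s move through a single uniform $U_{k+1}$, declaring $Y$ to move up iff $U_{k+1} < 1 - a_k$ and setting $\xi_{k+1} = +1$ iff $U_{k+1} < 1 - p$; this preserves both marginals and, whenever $a_k \geq p$, nests $\{Y \text{ up}\} \subseteq \{Z \text{ up}\}$, so that $Y$ can go up only if $Z$ does too. The crucial point is that in every configuration requiring this nesting one automatically has $Y_k \geq 0$: at $D_k = 0$ we have $Y_k = Z_k \geq 0$, and at $D_k = 1$ with $Z_k \geq 1$ we have $Y_k = Z_k - 1 \geq 0$, so $a_k \geq p$ genuinely applies. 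The one remaining subcase, $D_k = 1$ with $Z_k = 0$, forces $Y_k = -1$, where no drift control exists; but here the reflection of $Z$ at $0$ saves the invariant, because $Z_{k+1} \geq 0 \geq Y_{k+1}$ as $Y_{k+1} \leq 0$.

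I expect the bookkeeping of these cases to be the main obstacle — in particular, verifying that the ``$Y$ up, $Z$ down'' crossing can never occur at gap $1$, and cleanly separating the regime where the drift assumption is invoked (always with $Y_k \geq 0$) from the regime where only the reflection at $0$ is used (with $Y_k < 0$, which is outside the scope of Assumption~\ref{ass:alg_behave}). Once the invariant $Z_k \geq Y_k$ is shown to propagate through all cases, process-level stochastic domination follows immediately, since the construction yields $Z_k \geq Y_k$ almost surely for all $k$ simultaneously while preserving the marginal law \eqref{eq:randwalk_update} of $Z$.
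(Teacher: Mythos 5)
Your proposal is correct and follows essentially the same route as the paper: a step-by-step coupling that maintains the pathwise invariant $Z_k \geq Y_k$, invoking the drift bound $a_k \geq p$ (the paper's $p' \geq p$) precisely when $Y_k \geq 0$ and the reflection of $Z$ at zero when $Y_k < 0$. Your uniform-threshold monotone coupling at small gap induces the same joint law as the paper's two-stage randomization (where $Z$ moves up with probability $1 - p/p'$ when $Y$ moves down); the only cosmetic difference is that you split cases on the gap $D_k$ while the paper splits on the sign of $Y_k$.
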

\begin{proof}[Proof of Lemma \ref{lem:coupling_new}]
Initially, $Z_0=0$ and $Y_0 \leq 0$. For each $k$, we show how to update $Z_k$ to $Z_{k+1}$ according to how $Y_k$ changes to $Y_{k+1}$. We consider two cases depending on whether $k < T_\eps$ or $k \geq T_\eps$. 

\underline{Case 1: $k < T_\eps$}.
 If $Y_k\leq -1$, we update $Z_{k+1}$ from $Z_{k}$ according to \Cref{eq:randwalk_update}, independently of how $Y_k$ changes to $Y_{k+1}$. If $Y_k \geq 0$, then we first check if $Y_k$ increased or decreased. Let $p'$ be the probability that $Y_{k+1} = Y_k -1$ on this sample path. Since $Y_k \geq 0$, we know by \Cref{ass:alg_behave} that $p' \geq p$. Now, if $Y_{k+1} = Y_k + 1$, then we set $Z_{k+1} = Z_{k} + 1$. On the other hand, if $Y_{k+1} = Y_k - 1$, then we set $Z_{k+1} = Z_k + 1$ with probability $1 - \frac{p}{p'}$, and $Z_{k+1} = \max\{Z_k-1, 0\}$ with probability $\frac{p}{p'}$. Note that these probabilities are well-defined because $p' \geq p$. 
 
\underline{Case 2: $k \geq T_\eps$}. If $Y_{k+1} = Y_k + 1$, then set $Z_{k+1} = Z_{k}+1$. Otherwise, if $Y_{k+1} = Y_{k}-1$, then set $Z_{k+1} = \max\{Z_k-1, 0\}$. 

Observe that under this coupling, $Z_k \geq Y_k$ on every sample path. Moreover, $\{Z_k\}$ and $\{Y_k\}$ have the correct marginal distributions. For $Y_k$, this is easy to see, since it evolves according to its true distribution and we are constructing $Z_k$ from it. For $Z_k$, on any step with $k \geq T_\eps$, $Z_{k+1}$ evolves from $Z_k$ correctly according to \Cref{eq:randwalk_update} by construction. On a step with $k < T_\eps$ there are two cases: 1) $Y_k \leq -1$,  and 2) $Y_k \geq 0$. In the first case, the update from $Z_k$ to $Z_{k+1}$ clearly follows \Cref{eq:randwalk_update}. This is also true in the second case, since there the probability that $Z_{k}$ increases is $(1-p') + p'(1-\frac{p}{p'}) = 1-p$.

To summarize, we have exhibited a coupling between $\{Z_k\}$ and $\{Y_k\}$, under which $Z_k \geq Y_k$ on any sample path. 

\end{proof}

\subsection{Step 2: upper-bounding the maximum value of the random walk}
\label{sec:max_rand_walk}

We now derive a high probability upper bound on the maximum value reached by the random walk.
\begin{defn}
Let $N(l, n)$ be the random variable that denotes the number of times $Z_k=l$  in the first $n$ steps of the random walk.  
\end{defn}
By definition of $N(l, n)$, we have $N(l, n) > 0$ if and only if state $l$ is visited in the first $n$ steps of the random walk. The next proposition  upper bounds the probability that $N(l, n) > 0$. 

\begin{prop}\label{prop:Fill}
	Let $q=1-p$. We have
	$$\P(N(l,n) >0 ) \leq(n-l+1) \frac{1-(q / p)}{1-(q / p)^{l+1}}\left(\frac{q}{p}\right)^{l}  +
	\frac{2 \sqrt{pq}}{(1-2 \sqrt{p q})^{2}}(2q)^l.$$
\end{prop}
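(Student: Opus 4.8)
The plan is to bound the hitting indicator by the expected number of visits and then estimate the latter time-slice by time-slice. First I would observe that $\{N(l,n)>0\}$ is exactly the first-passage event $\{T_l \le n\}$ where $T_l := \min\{k : Z_k = l\}$, and that conditioning on the first passage gives $\P(T_l = t) \le \P(Z_t = l \mid Z_0 = 0)$ for every $t$ (write $\P(Z_t=l)=\sum_{s\le t}\P(T_l=s)\,u_{t-s}$ for the return kernel $u_m = \P(Z_m=l\mid Z_0=l)$ and use $u_0=1$). Summing this over $t$, equivalently applying Markov's inequality to $N(l,n)$, yields the reduction
$$\P(N(l,n) > 0) = \P(T_l \le n) \le \E[N(l,n)] = \sum_{t=l}^{n} \P(Z_t = l \mid Z_0 = 0),$$
where the sum starts at $t=l$ because the walk moves by at most one per step. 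Everything then reduces to estimating the finite-time occupation probabilities $\P(Z_t=l)$, and the two terms of the bound must arise as a stationary ``bulk'' contribution plus a summable transient correction.

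Next I would split each local probability accordingly. The key observation is that $\frac{1-(q/p)}{1-(q/p)^{l+1}}(q/p)^{l}$ is precisely the stationary probability of state $l$ for the birth–death chain reflected at \emph{both} $0$ and $l$ on $\{0,\dots,l\}$, which I will call $\pi_l^{(l)}$. Capping the walk at $l$ only piles more mass onto state $l$, so I expect $\P(Z_t = l) \le \P(\widetilde Z_t = l)$ for the doubly-reflected chain $\widetilde Z$, and I would aim to show $\P(Z_t = l) \le \pi_l^{(l)} + \rho_t$ for a transient remainder $\rho_t$. Summing the stationary part over the $n-l+1$ admissible times $t=l,\dots,n$ then produces the first term $(n-l+1)\pi_l^{(l)}$, and it remains to show $\sum_t \rho_t$ is bounded by the second term.

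To control the transient remainder I would compare to the free (unreflected) walk $S_t$ on $\mathbb{Z}$ with $\P(+1)=q$, $\P(-1)=p$, whose local probabilities decay geometrically: using $\binom{t}{t/2}\le 2^{t}$, the return probability satisfies $\P(S_t=0)\le (2\sqrt{pq})^{t}$ for even $t$, while reaching level $l$ from $0$ contributes a factor $(2q)^{l}$. Accounting for the first passage to $l$ together with the subsequent returns (a convolution that introduces a linear time weight) and summing over the time offset produces the arithmetic–geometric series $\sum_{m\ge1} m\,(2\sqrt{pq})^{m} = \frac{2\sqrt{pq}}{(1-2\sqrt{pq})^{2}}$, which is finite precisely because $p>\tfrac12$ forces $2\sqrt{pq}<1$. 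Combining this with the $(2q)^l$ factor yields the stated correction $\frac{2\sqrt{pq}}{(1-2\sqrt{pq})^{2}}(2q)^{l}$.

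The main obstacle I anticipate is the second step: making the split $\P(Z_t = l) \le \pi_l^{(l)} + \rho_t$ rigorous with the correct constants. The difficulty is disentangling the reflected walk's occupation at level $l$ into a stationary part that comes out as the doubly-reflected $\pi_l^{(l)}$ and a transient part that sums to a finite constant, all while correctly handling the reflection and holding at $0$ and the excursions that carry the walk above $l$. The comparison to the free walk must be arranged so that the free-walk local bound genuinely dominates the reflected occupation at $l$; verifying this domination—e.g.\ via a monotone coupling between the singly- and doubly-reflected chains together with a reflection/first-passage decomposition relating $\widetilde Z$ to $S$—is where the real care lies, whereas the series evaluations in the last step are routine once the per-time estimate is in hand.
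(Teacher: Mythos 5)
Your proposal has the right skeleton and matches the paper's strategy at the structural level: both reduce $\P(N(l,n)>0)$ to a sum over $t$ of time-$t$ occupation probabilities of level $l$ (your Markov/first-passage step is the paper's subadditivity bound), both recognize $\frac{1-(q/p)}{1-(q/p)^{l+1}}(q/p)^{l}$ as the stationary mass at $l$ of the chain capped at level $l$, and both attribute the second term to a transient remainder decaying like $(2\sqrt{pq})^{t}$ with a $(2q)^{l}$ prefactor. But the step you flag as the ``main obstacle'' is not a technicality --- it is the entire content of the proposition, and your proposal does not supply it. The paper closes exactly this gap by invoking Feller's explicit diagonalization of the tridiagonal transition matrix of the capped birth--death chain (\cite{feller_book}, Section XVI.3, eq.\ (3.16)), which gives an \emph{exact} identity $P_{0,l}^{m} = \pi_l - (\text{oscillatory spectral terms})$, each nontrivial eigenvalue being $2\sqrt{pq}\cos\frac{\pi r}{l+1}$ of modulus at most $2\sqrt{pq}$; bounding the oscillatory sum and summing over $m=l,\dots,n$ yields both terms with the exact constants (the constant $\frac{2\sqrt{pq}}{(1-2\sqrt{pq})^2}$ arises there as an eigenvalue-bound prefactor times a geometric sum, not as your arithmetic--geometric series $\sum_m m(2\sqrt{pq})^m$, so your numerical match of the constant is coincidental in form). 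Your proposed substitute --- comparison with the free walk $S_t$ --- cannot work as stated: for the free walk one has $\P(S_t=l)\le(2q)^l(2\sqrt{pq})^{t-l}$, whence $\sum_{t\ge l}\P(S_t=l)\le (2q)^l/(1-2\sqrt{pq})$ with \emph{no} linear-in-$n$ term at all; the $(n-l+1)\pi_l$ term exists only because of the reflection at $0$, and it is precisely the interaction between the reflection and the excursions above $l$ that the spectral formula handles and that your first-passage/return convolution sketch does not.

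There is also a directional error in the one concrete coupling you propose. The natural monotone coupling (same up/down coins, cap at $l$) gives $\widetilde Z_t \le Z_t$ pathwise, which yields $\P(\widetilde Z_t \ge l)\le \P(Z_t\ge l)$ --- the \emph{opposite} direction from the domination $\P(Z_t=l)\le\P(\widetilde Z_t=l)$ you need, and the pathwise inclusion genuinely fails: with $l=2$ and coin sequence up, up, up, down, one gets $Z_4=2=l$ while $\widetilde Z_4=1$. The inequality you want may still be true, but this coupling does not prove it. The clean repair is the paper's very first line: replace the walk by the capped chain \emph{before} applying the union bound, which costs nothing because $Z$ and $\widetilde Z$ coincide up to the first visit to $l$ and hence have identical hitting events $\{N(l,n)>0\}$. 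That eliminates your domination step entirely --- but you would still need the spectral decomposition (or an equivalent explicit computation for the tridiagonal matrix) to obtain the per-time bound $P_{0,l}^m \le \pi_l + \frac{2p}{1-2\sqrt{pq}}(q/p)^{(l+1)/2}(2\sqrt{pq})^m$, so the central gap in your argument remains the decisive one.
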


\begin{proof}
	First, observe that $ \P({N}(l, n) > 0)$ remains unchanged if we change the state space from $\{0, 1,2, \ldots\}$ to $\{0, 1,2, \ldots, l\}$ and modify the walk to hold in state $l$ with probability $q$ (instead of moving from $l$ to $l+1$ with that probability). This defines a Markov chain on $\{0,1,2,\ldots, l\}$, and let $P$ be its transition matrix. Noting that $P_{0,l}^m$ is the probability that the Markov chain is in state $l$ at time $m$, we see that
	\begin{equation}
		\label{eq:subadd}
		\P(N(l,n) >0 ) \leq \sum_{m=l}^{n} P_{0, l}^{m}.
	\end{equation}
	
	The matrix $P$ is explicitly diagonalized in \cite{feller_book} (Section XVI.3). By $(3.16)$ in that section,
	\begin{equation}
		\label{eq:transprob}
		P_{0,l}^{m}=\frac{1-(q/p)}{1-(q / p)^{l+1}}\left(\frac{q}{p}\right)^{l} 
		-\frac{2 q}{l+1}\left(\frac{q}{p}\right)^{\frac{l-1}{2}} \sum_{r=1}^{l} \frac{\left[\sin \frac{\pi r}{l+1}\right]\left[\sin \frac{\pi rl}{l+1}\right]\left[2 \sqrt{pq} \cos \frac{\pi r}{l+1}\right]^{m}}{1-2 \sqrt{p q} \cos \frac{\pi r}{l+1}}.
	\end{equation}
	The absolute value of the sum appearing in (\ref{eq:transprob}) can of course be bounded above by
	$$
	\sum_{r=1}^{l} \frac{(2 \sqrt{p q})^{m}}{1-2 \sqrt{p q}}=l \frac{(2 \sqrt{p q})^{m}}{1-2 \sqrt{p q}}
	$$
	and this readily yields
	\begin{equation}
		\label{eq:fillprop_final}
		P_{0, l}^{m} \leq \frac{1-(q/p)}{1-(q / p)^{l+1}}\left(\frac{q}{p}\right)^{l} 
		+
		\frac{2 p}{1-2 \sqrt{p q}}\left(\frac{q}{p}\right)^{\frac{l+1}{2}}(2 \sqrt{p q})^{m}.
	\end{equation}
	Summing (\ref{eq:fillprop_final}) over $m=l, \ldots, n$ and using (\ref{eq:subadd}), we obtain the bound on $\P(N(l,n) >0 )$ claimed in the proposition.
 
\end{proof}

\begin{remark}	The bound for Proposition \ref{prop:Fill} is essentially tight, as the decay of $\P(N(l,n) >0 )$ is not faster than geometric; $q^l$ is a lower bound. 
\end{remark} 

With the above proposition at hand, Theorem \ref{thm:stepsize_bound} is proved by choosing an appropriate level $l$, for which $\P(N(l,n) =0 )$ is high. 
\begin{proof}[Proof of \Cref{thm:stepsize_bound}]
	Let $q=1-p$. By Proposition \ref{prop:Fill}, we have:
	$$\P(N(l,n) >0 ) \leq(n-l+1) \frac{1-(q / p)}{1-(q / p)^{l+1}}\left(\frac{q}{p}\right)^{l}  +
	\frac{2 \sqrt{pq}}{(1-2 \sqrt{p q})^{2}}(2q)^l.$$
	In other words,
	\begin{align*}
		\P(N(l,n) =0 ) &\geq 1-(n-l+1) \frac{1-(q / p)}{1-(q / p)^{l+1}}\left(\frac{q}{p}\right)^{l}  -
		\frac{2 \sqrt{pq}}{(1-2 \sqrt{p q})^{2}}(2q)^l \\
		&\geq 1- n \left(\frac{q}{p}\right)^{l}  -
		\frac{2 \sqrt{pq}}{(1-2 \sqrt{p q})^{2}}(2q)^l.
	\end{align*}
	Let $a>0$ be a parameter to be set later, and take $l=\ceil{a\log(n)}$. Then, the above inequality implies:
	\begin{align*}
	\P(N(l,n) =0 ) &\geq 1- n \left(\frac{q}{p}\right)^{a\log(n)}  -
	\frac{2 \sqrt{pq}}{(1-2 \sqrt{p q})^{2}}(2q)^{a\log(n)} \\
	&= 1- n^{1-a\log(\frac{p}{q})}  -
	\frac{2 \sqrt{pq}}{(1-2 \sqrt{p q})^{2}}n^{-a\log(1/(2q))}.
	\end{align*}
	\rev{In going from the first line to the second, we used the following fact about logarithms: $x^{a \log n} = n^{a \log x} = n^{-a\log (1/x)}$.} Let $a = \frac{1+\omega}{\log (1/2q)}$. Then, $a \log(\frac{p}{q})  \geq  a \log(\frac{1}{2q}) = 1+\omega$, so 
	$$\P(N(l,n) =0 )\geq 
	1- n^{-\omega}   -
	\frac{2 \sqrt{pq}}{(1-2 \sqrt{p q})^{2}}n^{-(1+\omega)}.$$
	In other words, with probability at least $1 - n^{-\omega} - cn^{-(1+\omega)}$ with $c = \frac{2\sqrt{pq}}{(1-2\sqrt{pq})^2}$, the random walk will remain below $l=\ceil{(1+\omega)\log_{1/2q} n}$ in the first $n$ steps. By construction of the coupling, with the above probability, we know the  $\alpha$ parameter in the algorithm either remains above $\gamma^{\ceil{a\log n}}\bar{\alpha} = \gamma^{\ceil{(1+\omega)\log_{1/2q} n}}\bar{\alpha}$  throughout the first $n$ steps, or the algorithm has reached its stopping time in $n$ steps. 
 
\end{proof}

\rev{
	It is natural to ask if the theory extends to the case where the factors for increasing and decreasing the step size are different. The main challenge in this case is that the stochastic process of the step sizes is now modeled by a random walk whose step length going up is different than the step length going down. If it turns out that the hitting probability $\P(N(l,n)>0)$ can be bounded for the more general one-sided random walks, we believe the theory in our paper can be extended similarly. We will leave it as a subject for future research. 
}

\section{Expected and high probability total oracle complexity}
\label{sec:abstract}

We now use the tools derived in the previous section to obtain abstract expected and high probability upper bounds on the total oracle complexity of \Cref{alg:generic_stochastic}. In \Cref{sec:apply}, we will derive concrete bounds for the total oracle complexity of two specific algorithms (STORM and SASS), and the specific oracles arising in expected risk minimization.

The cost of an oracle call may depend on the step size parameter $\alpha$ and the probability parameter $1-\delta$, thus we denote the cost by $\sc(\alpha, 1-\delta)$. 
We will use $\sc(\alpha)$ in the paper to simplify the notation because for all algorithms in the class, $\delta$ can be treated as a constant.
 Moreover, the cost of an oracle call is a non-increasing function of  $\alpha$ for all algorithms developed so far that fit into the framework. 
 
\begin{assumption}
	\label{ass:monotone}
  $\sc(\alpha)$ is non-increasing in $\alpha$.
\end{assumption}
\begin{defn}[Total Oracle Complexity]
For a positive integer $n$, let $\SC(n)$ be the random variable which denotes the total oracle complexity of running the algorithm for $n$ iterations. In other words,
$$\SC(n) = \sum_{k=1}^n \sc(\mathcal{A}_k).$$
\end{defn}

\subsection{Abstract expected total oracle complexity}
We now proceed to bound $\SC(\min\{T_\epsilon, n\})$ in expectation, where $n$ is an arbitrary positive integer.
\begin{theorem}
\label{thm:sc_expected}
Let \Cref{ass:alg_behave} and \Cref{ass:monotone} hold in Algorithm \ref{alg:generic_stochastic}. For any positive integer $n$, we have
	$$\E[\SC(\min\{T_\epsilon, n\})] \leq n \,\sum_{l=1}^{n} \min \left\{ 1, n\left(\frac{q}{p}\right)^l + \frac{2 \sqrt{pq}}{(1-2 \sqrt{p q})^{2}}(2q)^l\right \}\cdot \sc(\bar{\alpha} \gamma^{l})
	+ n \,\sc(\bar{\alpha}). $$
	
\end{theorem}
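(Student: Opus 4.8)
The plan is to bound the random quantity $\SC(\min\{T_\eps,n\})$ \emph{pathwise} and then take expectations, transferring everything onto the dominating one-sided walk $\{Z_k\}$ from \Cref{lem:coupling_new} and invoking the occupation-time control of \Cref{prop:Fill}. The starting observation is that for every $k \leq T_\eps$ we have $\mathcal{A}_k = \bar\alpha\gamma^{Y_k}$, where $\{Y_k\}$ is the walk defined in \eqref{eq:Yk}, and that the coupling yields $Z_k \geq Y_k$ on every sample path. Since $0<\gamma<1$ this gives $\bar\alpha\gamma^{Z_k} \leq \bar\alpha\gamma^{Y_k}$, and since $\sc$ is non-increasing (\Cref{ass:monotone}) the two reversals compose in the right direction: $\sc(\mathcal{A}_k) = \sc(\bar\alpha\gamma^{Y_k}) \leq \sc(\bar\alpha\gamma^{Z_k})$. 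This is the key inequality of the argument.

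First I would split the sum $\sum_{k=1}^{\min\{T_\eps,n\}}\sc(\mathcal{A}_k)$ according to the sign of $Y_k$. For indices with $Y_k \leq 0$ we have $\mathcal{A}_k = \bar\alpha\gamma^{Y_k} \geq \bar\alpha$, so monotonicity gives $\sc(\mathcal{A}_k) \leq \sc(\bar\alpha)$; since there are at most $n$ such indices this contributes at most $n\,\sc(\bar\alpha)$, which is the final term of the claimed bound. For indices with $Y_k \geq 1$ I apply the coupling inequality above and note that $Y_k \geq 1$ forces $Z_k \geq 1$; enlarging the index set to all $k \leq n$ with $Z_k \geq 1$ (which only adds non-negative cost terms) the contribution is at most
$$\sum_{k=1}^n \sc(\bar\alpha\gamma^{Z_k})\,\one[Z_k \geq 1] = \sum_{l=1}^n \sc(\bar\alpha\gamma^l)\,N(l,n),$$
where I reorganized by level $l$, used $N(l,n) = \sum_{k=1}^n \one[Z_k=l]$, and observed that levels $l > n$ cannot be reached in $n$ steps. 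Combining the two pieces gives the pathwise estimate $\SC(\min\{T_\eps,n\}) \leq n\,\sc(\bar\alpha) + \sum_{l=1}^n \sc(\bar\alpha\gamma^l)\,N(l,n)$; taking expectations and using linearity reduces matters to bounding $\E[N(l,n)]$.

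Finally I would bound $\E[N(l,n)]$ in two complementary ways. Trivially $N(l,n) \leq n$, so $\E[N(l,n)] \leq n$. On the other hand, $N(l,n) \leq n$ also gives $\E[N(l,n)] = \E[N(l,n)\,\one[N(l,n)>0]] \leq n\,\P(N(l,n)>0)$, and \Cref{prop:Fill} (after bounding $(n-l+1)\tfrac{1-(q/p)}{1-(q/p)^{l+1}} \leq n$ exactly as in the proof of \Cref{thm:stepsize_bound}) gives $\P(N(l,n)>0) \leq n(q/p)^l + \tfrac{2\sqrt{pq}}{(1-2\sqrt{pq})^2}(2q)^l$. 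Together these yield $\E[N(l,n)] \leq n\min\{1,\, n(q/p)^l + \tfrac{2\sqrt{pq}}{(1-2\sqrt{pq})^2}(2q)^l\}$, and substituting into the expectation bound produces exactly the stated inequality.

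The main obstacle is conceptual rather than computational: getting the direction of every inequality right when passing from $Y_k$ to the dominating walk $Z_k$, and correctly isolating the regime $Y_k \leq 0$. Since $Z_k$ lives on the non-negative integers it cannot track $Y_k$ when $Y_k<0$, and it is precisely this mismatch that forces the separate additive $n\,\sc(\bar\alpha)$ term rather than a single clean sum. The second delicate point is the conversion from the hitting-probability bound of \Cref{prop:Fill} to a bound on the \emph{expected occupation time} $\E[N(l,n)]$, for which the crude estimate $N(l,n)\le n$ is exactly what is needed to take the minimum with $1$.
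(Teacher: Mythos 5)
Your proof is correct, and it reaches the paper's bound by a genuinely different decomposition. The paper never writes a pathwise bound on $\SC$; instead it works at the level of distribution functions: it observes that $\P\left(\SC(\min\{T_\eps,n\}) > n\,\sc(\bar\alpha\gamma^l)\right) \leq \P(N(l+1,n)>0)$ (if the coupled walk never reaches level $l+1$, every step size up to $\min\{T_\eps,n\}$ stays at least $\bar\alpha\gamma^{l}$, so the total cost is at most $n\,\sc(\bar\alpha\gamma^{l})$), and then assembles the expectation by a layer-cake summation over the cost shells $\left(n\,\sc(\bar\alpha\gamma^l),\, n\,\sc(\bar\alpha\gamma^{l+1})\right]$, with the base shell $[0, n\,\sc(\bar\alpha)]$ supplying the additive $n\,\sc(\bar\alpha)$ term. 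You instead prove the pathwise inequality $\SC(\min\{T_\eps,n\}) \leq n\,\sc(\bar\alpha) + \sum_{l=1}^n \sc(\bar\alpha\gamma^l)\,N(l,n)$ and finish by linearity of expectation together with $\E[N(l,n)] \leq n\min\{1,\P(N(l,n)>0)\}$. Both routes hinge on exactly the same two ingredients, the coupling of \Cref{lem:coupling_new} and \Cref{prop:Fill} with the simplification $(n-l+1)\frac{1-(q/p)}{1-(q/p)^{l+1}} \leq n$ (valid since $p>\frac12$), and they yield the identical bound once the paper reindexes $l+1 \mapsto l$. Your version buys two things: it makes transparent that the $n\,\sc(\bar\alpha)$ term is precisely the cost of the excursions with $Y_k \leq 0$, which the one-sided walk cannot track, and it isolates $\E[N(l,n)]$ as the quantity to bound, so any occupation-time estimate sharper than the crude $N(l,n) \leq n$ on the event $\{N(l,n)>0\}$ would immediately tighten the result; the paper's tail-probability version avoids occupation times altogether and needs only the hitting probability. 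One shared gloss worth flagging: the identification $\mathcal{A}_k = \bar\alpha\gamma^{Y_k}$ ignores the cap at $\amax$, but since $\alpha_0 \leq \amax$ the cap can bind only in the regime $Y_k \leq 0$, where you bound the cost by $\sc(\bar\alpha)$ anyway, so this is harmless in your argument exactly as it is in the paper's.
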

\begin{proof}
First, observe that if the $\alpha_k$ parameters are all above some value $\alpha^*$ in the first $n$ steps, then by \Cref{ass:monotone}, $\SC(n) \leq n \cdot \sc(\alpha^*)$. 
Therefore, for any integer $l \geq 0$, we have
\begin{align}
\P(\SC(\min\{T_\epsilon, n\}) > n \cdot \sc(\bar{\alpha} \gamma^l)) 
\leq \P(N(l+1,n) > 0).
\end{align}
By Proposition 
\ref{prop:Fill}, 
$$\P(N(l+1,n) >0 ) \leq n \left(\frac{q}{p}\right)^{l+1}  +
	\frac{2 \sqrt{pq}}{(1-2 \sqrt{p q})^{2}}(2q)^{l+1}.$$

This implies
$$ \P(\SC(\min\{T_\epsilon, n\}) > n \cdot \sc(\bar{\alpha} \gamma^l)) \leq \min \left\{ 1, n\left(\frac{q}{p}\right)^{l+1} + \frac{2 \sqrt{pq}}{(1-2 \sqrt{p q})^{2}}(2q)^{l+1}\right \}.$$
By the definition of expectation,
\begin{align*}
	& \E[\SC(\min\{T_\epsilon, n\})]\\
	&= \sum_{i=0}^{n \cdot \sc(\bar{\alpha}\gamma^n)}{i \cdot \P(\SC(\min\{T_\epsilon, n\}) = i)} \\
	&\leq \sum_{l=0}^{n-1} \P\left(\SC(\min\{T_\epsilon, n\}) \in (n \cdot \sc(\bar{\alpha} \gamma^l) , n \cdot \sc(\bar{\alpha} \gamma^{l+1})]\right) \cdot n \, \sc(\bar{\alpha} \gamma^{l+1}) \\
	&\qquad+ \P\left(\SC(\min\{T_\epsilon, n\}) \in [0 , n \,\sc(\bar{\alpha})]\right) \cdot n \,\sc(\bar{\alpha}) \\
	&\leq \sum_{l=0}^{n-1} \P\left(\SC(\min\{T_\epsilon, n\})> n \, \sc(\bar{\alpha} \gamma^{l}) \right) \cdot  n \,\sc(\bar{\alpha} \gamma^{l+1}) + 
	n \,\sc(\bar{\alpha})  \\
	&\leq \sum_{l=0}^{n-1} \min \left\{ 1, n\left(\frac{q}{p}\right)^{l+1} + \frac{2 \sqrt{pq}}{(1-2 \sqrt{p q})^{2}}(2q)^{l+1}\right \}\cdot n \, \sc(\bar{\alpha} \gamma^{l+1})
	+ n \,\sc(\bar{\alpha}).
\end{align*}
\end{proof}

\subsection{Abstract high probability total oracle complexity}
\label{abstract:high_prob}
We now proceed to bound $\SC(T_{\eps})$ in high probability, using \Cref{thm:stepsize_bound}.

\begin{theorem}
\label{cor:sc_highprob}
\begin{sloppypar}
	Let \Cref{ass:alg_behave} and \Cref{ass:monotone} hold in Algorithm \ref{alg:generic_stochastic}.
	 For any $\omega  > 0$ and positive integer $n$, with probability at least $1-\P(T_\eps> n)- n^{-\omega } - cn^{-(1+\omega )}$, $$\SC(T_{\eps})\leq n\cdot \sc(\alpha^*(n)),$$ 
	 where $\alpha^*(n)= \bar{\alpha}\gamma   n^{-{(1+\omega)\log_{1/2q} 1/\gamma}}$ , and $c$ is as defined in \Cref{thm:stepsize_bound}.
\end{sloppypar}
	 If $\gamma $ is chosen to be at least $\max\left\{\frac12, \left(\frac{1}{2q}\right)^{\frac{\log(2\beta)}{(1+\omega)\log n} } \right\}$ for some $\beta < \frac12$, then $\alpha^*(n) \geq \beta \bar{\alpha}$, thus \rev{with probability at least $1-\P(T_\eps> n)- n^{-\omega } - cn^{-(1+\omega )}$,}
	 $$\SC(T_{\eps})\leq n\cdot \sc(\beta \bar{\alpha}).$$ 
\end{theorem}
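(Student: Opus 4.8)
The plan is to split the analysis across two events: the ``good'' step-size event $\mathcal{E}$ supplied by \Cref{thm:stepsize_bound}, and the event $\{T_\eps \le n\}$ that the algorithm stops within $n$ iterations. Writing $\SC(T_\eps) = \sum_{k=1}^{T_\eps}\sc(\mathcal{A}_k)$, I would bound this sum termwise on $\mathcal{E}\cap\{T_\eps\le n\}$ and then control the failure probability by a union bound. Concretely, I would take $\mathcal{E}$ to be the event $\{N(l,n)=0\}$ with $l=\lceil (1+\omega)\log_{1/2q} n\rceil$ that underlies the proof of \Cref{thm:stepsize_bound}, whose probability is at least $1 - n^{-\omega} - cn^{-(1+\omega)}$ by \Cref{prop:Fill}.

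On the intersection $\mathcal{E}\cap\{T_\eps\le n\}$ the coupling of \Cref{lem:coupling_new} forces $\mathcal{A}_k \ge \alpha^*(n)$ for every $k \le \min\{n,T_\eps\}$, which here equals $T_\eps$ since $T_\eps \le n$. Invoking \Cref{ass:monotone} (monotonicity of $\sc$), each summand satisfies $\sc(\mathcal{A}_k)\le \sc(\alpha^*(n))$, so that $\SC(T_\eps) = \sum_{k=1}^{T_\eps}\sc(\mathcal{A}_k) \le T_\eps\,\sc(\alpha^*(n)) \le n\,\sc(\alpha^*(n))$, where the last step uses $T_\eps\le n$. For the probability, a union bound gives $\P(\mathcal{E}\cap\{T_\eps\le n\}) \ge 1 - \P(\mathcal{E}^c) - \P(T_\eps>n) \ge 1 - n^{-\omega} - cn^{-(1+\omega)} - \P(T_\eps>n)$, which is exactly the claimed confidence level. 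This settles the first displayed inequality.

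For the second claim I would feed in the prescribed lower bound on $\gamma$. With $\gamma \ge \max\{\tfrac12,\ (1/2q)^{\log(2\beta)/((1+\omega)\log n)}\}$, substituting into $\alpha^*(n) = \bar\alpha\gamma\, n^{-(1+\omega)\log_{1/2q}(1/\gamma)}$ yields $\alpha^*(n)\ge \beta\bar\alpha$, exactly as established in \Cref{cor:stepsize_bound}. Applying \Cref{ass:monotone} once more gives $\sc(\alpha^*(n))\le \sc(\beta\bar\alpha)$, so the bound from the first part upgrades to $\SC(T_\eps)\le n\,\sc(\beta\bar\alpha)$ on the same event, hence with the same probability.

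The main obstacle is interpreting \Cref{thm:stepsize_bound} correctly rather than treating it as a black box. Its statement is a disjunction, ``$T_\eps<n$ or $\min_{1\le k\le n}\alpha_k\ge\alpha^*(n)$,'' and naively intersecting this with $\{T_\eps\le n\}$ is insufficient: when the first clause is what holds, one obtains no lower bound on the step sizes up to $T_\eps$. The resolution, which I would state explicitly, is that the good event in fact delivers the stronger conclusion $\mathcal{A}_k\ge\alpha^*(n)$ for all $k\le\min\{n,T_\eps\}$; this is precisely what the coupling in \Cref{lem:coupling_new} proves, since $Y_k=\bar Y_k$ only up to $T_\eps$ while $\{N(l,n)=0\}$ pins $Z_k$, and therefore $Y_k$, below $l$ throughout the first $n$ steps. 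Once this refinement is in hand, the termwise bound up to $T_\eps$ goes through and the remainder of the argument is routine.
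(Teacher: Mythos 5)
Your proposal is correct and follows essentially the same route as the paper's proof: both intersect the good random-walk event underlying \Cref{thm:stepsize_bound} with $\{T_\eps \le n\}$ via a union bound, use the coupling of \Cref{lem:coupling_new} together with \Cref{ass:monotone} to bound each per-iteration cost by $\sc(\alpha^*(n))$, and obtain the second claim by substituting the prescribed $\gamma$ as in \Cref{cor:stepsize_bound}. Your explicit refinement that the good event yields $\mathcal{A}_k \ge \alpha^*(n)$ for all $k \le \min\{n, T_\eps\}$ (rather than the bare disjunction) is exactly the content the paper encodes through its auxiliary quantity $\SC_{\mathsf{rw}}$ and the chain $\SC(T_\eps) \le \SC_{\mathsf{rw}}(T_\eps) \le \SC_{\mathsf{rw}}(n) \le n\,\sc(\alpha^*(n))$, so the difference is purely presentational.
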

\begin{proof}
	
	Let $\SC_{\mathsf{rw}}(n)$ be the total oracle complexity of the first $n$ iterations with the corresponding sequence of parameters $\alpha_k$ induced by the one-sided random walk (that is, the sequence defined by $\alpha_k = \bar{\alpha}\gamma^{Z_k}$, where $Z_k$ is defined in \Cref{sec:randwalk}). In other words, 
	$$\SC_{\mathsf{rw}}(n) = \sum_{k=1}^n \sc(\bar{\alpha}\gamma^{Z_k}).$$
	With probability $1 - \P(T_\eps > n)$, we have $T_\eps \leq n$, which implies
	$$\SC(T_{\eps})\leq \SC_{\mathsf{rw}}(T_\eps) \leq \SC_{\mathsf{rw}}(n).$$
	Here, the first inequality is by \Cref{lem:coupling_new} and \Cref{ass:monotone}, and the second inequality is by $T_\eps \leq n$.
	
	The same arguments used in the proof of \Cref{thm:stepsize_bound} show that with probability at least $1 - n^{-\omega} - c n^{-(1+\omega)}$,  we have $\min_{1 \leq k \leq n} \bar{\alpha}\gamma^{Z_k} \geq \alpha^*(n)$. Thus, with at least this probability, $\SC_{\mathsf{rw}}(n) \leq n\cdot\sc(\alpha^*(n))$. 
	
	Putting these together with a union bound, the result follows.
	
	The second part of the theorem follows from substituting in the specific choice of $\gamma$.
\end{proof}

\section{Applying to STORM and SASS}\label{sec:apply}
In this section, we demonstrate how the generic oracle complexity bounds in the previous section can be applied to concrete combinations of oracles and algorithms.  We will consider the specific setting of expected risk minimization and two algorithms, first-order STORM and SASS, which are described earlier in the paper and fully analyzed in \cite{blanchet2019convergence} and  \cite{sass_arxiv},  respectively.  For each case, we will state the bounds on $\sc(\alpha)$ as a function of $\alpha$, and use those bounds in conjunction with the known bounds on $T_\eps$ (that have been derived in previous papers), to obtain a bound on the total oracle complexity for each algorithm. 

The results we obtain are the first ones that bound the total oracle complexity of STORM and SASS, and we show that both algorithms are essentially near optimal in terms of total gradient sample complexity. 
When deriving these results, for simplicity of presentation, we omit most of the constants involved in the specific bounds on $T_\eps$ and specific conditions on various algorithmic constants. For all such details, we refer the reader to  \cite{blanchet2019convergence} and  \cite{sass_arxiv}. We will include short comments regarding these constants, but otherwise replace them with a ${\cal O}(\cdot)$ notation. 

 \textbf{Problem Setting: Expected risk minimization (ERM)} can be written as  
$$
\min_{x\in \R^m} \phi(x)=\E_{d \sim \mathcal{D}}[l(x,d)].
$$ 
Here, $x$ represents the vector of model parameters, $d$ is a data sample following distribution $\mathcal{D}$, and $l(x,d)$ is the loss when the model parameterized by $x$ is evaluated on data point $d$. This problem is central in supervised machine learning and other settings such as simulation optimization \cite{kim2015guide}. For this problem, it is common to assume the function $\phi$ is $L$-smooth and is bounded from below, and gradients of functions $\nabla_x l(x, d)$ can be computed for any $d\sim   \mathcal{D}$, so we will consider this setting in this section.

In this setting, the zeroth- and first-order oracles are usually computed as follows: 
\begin{equation}\label{eq:erm_oracle}
f(x, \mathcal{S}) = \frac{1}{|\mathcal{S}|}\sum_{d\in \mathcal{S}}l(x,d),\quad g(x, \mathcal{S}) = \frac{1}{|\mathcal{S}|}\sum_{d\in \mathcal{S}}\nabla_x l(x,d), 
\end{equation}
where $\mathcal{S}$ is the ``minibatch" - that is a set of i.i.d samples from $\mathcal{D}$. Generally, $|\mathcal{S}|$ can be chosen to depend on $x$.

In what follows we will refer to the total number of times an algorithm computes $l(x,d)$ for a specific $x$ and $d$ as its {\em total function value sample complexity} and the number of times the algorithm computes  $\nabla l(x,d)$ as its  {\em total gradient sample complexity}. The total (oracle or sample) complexity of the algorithm is defined as the sum of these two quantities. 

\subsection{Total sample complexity of first-order STORM}
\label{sec:storm}

We first consider the first-order stochastic trust-region method (STORM) as introduced and analyzed 
in \cite{blanchet2019convergence}. 
 The algorithm uses zeroth- and first-order oracles defined in TR1.0 and TR1.1 in \Cref{sec:algo}. Trust-region algorithms are usually applied to nonconvex functions and  the stopping time of STORM is defined as $T_\eps = \min\{k: \norm{\nabla \phi(x_k)} \leq \eps \}$. In Section 3.3 of \cite{blanchet2019convergence}, it is shown  that Assumption \ref{ass:alg_behave} is satisfied with $\bar\alpha= \frac{\eps}{\zeta}$, where $\zeta$ is
 a moderate constant that depends on $\kappa_{eg}$, $L$  and some constant chosen by the algorithm. 
 
In  \cite{blanchet2019convergence}, the oracle costs of STORM in the ERM setting are briefly discussed under the following \textbf{assumptions} on $ l(x, d)$.

\begin{itemize}
\item Function value: It is assumed that there is some $\sigma_f \geq 0$ such that for all $x$, $\Var_{d\sim \mathcal{D}}\left[l(x,d)\right] \leq \sigma_f^2$. 
\item Gradient: It is assumed that $\E_{d \sim \mathcal{D}} [\nabla_x l(x, d)] = \nabla \phi(x)$, and that there is some $\sigma_g \geq 0$ such that for all $x$, 
\begin{equation}\label{BCN_0}
	\E_{d\sim \mathcal{D}}\norm{\nabla_x l(x,d)-\nabla\phi(x)}^2\leq \sigma_g^2.
\end{equation} 
\end{itemize}
 The cost of each oracle call is the number of samples in the associated minibatch $\mathcal{S}$.
By applying   Chebyshev's inequality it is easy to bound the oracle costs of TR1.0 and TR1.1. 
\begin{itemize}
\item Cost of TR1.0 with parameter $\alpha$: $\sc_0(\alpha) = \frac{\sigma_f^2}{\delta_0\kappa_{ef}^2\alpha^4}$,
	\item Cost of TR1.1 with parameter $\alpha$: $\sc_1(\alpha) = \frac{\sigma_{g}^{2}}{\delta_1\kappa_{e g}^{2} \alpha^{2}} $.
\end{itemize}
Below we substitute the specific oracle costs into \Cref{thm:sc_expected} to obtain the expected total sample complexity for the first-order STORM algorithm. Specifically, we will bound the total sample complexity of STORM $\E[\SC(\min\{T_\eps, n\})]$  by deriving bounds on the expectation of the total
 function value sample complexity $\SC_0$ and the total gradient sample complexity $\SC_1$, where 
$$\SC_0(n) = \sum_{k=1}^n \sc_0(\mathcal{A}_k), ~\SC_1(n) = \sum_{k=1}^n \sc_1(\mathcal{A}_k)  \text{ ~and~ } \SC(n)=\SC_0(n)+\SC_1(n). $$


\begin{theorem}[Expected Total Sample Complexity Bound of First-Order STORM] 
\label{thm:storm}
Let $p = 1 - \delta_0 - \delta_1$ and $q = 1 - p$. For the first-order STORM algorithm, for any iteration \rev{number} $n \in \Z^+$, and $\gamma > (2q)^{\frac14}$, we have
\begin{align*}
\E[\SC(\min\{T_\eps, n\})] \leq  \; &
{\cal O}\left(n\log_{p/q}(n) \,\left(
\frac{\sigma_f^2}{\eps^4}\,
n^{4\log_{q/p}\gamma}  + \frac{\sigma_g^2 }{\eps^2}\,n^{2\log_{q/p}\gamma}  \right)\right).
\end{align*}

If $\gamma \geq \left(\frac{q}{p}\right)^{\frac{\log c}{\log n}}$ for some constant $c > 1$ (so that $n^{\log_{q/p}\gamma} \leq c$), the above simplifies to be 
\begin{equation}
\label{eq:storm_toc_exp_beta}
\E[\SC(\min\{T_\eps, n\})] \leq n\log(n) \cdot {\cal O}\left(\frac{\sigma_f^2}{\eps^4} + \frac{\sigma_g^2}{ \eps^2} \right).
\end{equation}
\end{theorem}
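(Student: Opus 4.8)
The plan is to apply Theorem \ref{thm:sc_expected} separately to the two oracle cost functions $\sc_0$ and $\sc_1$, since $\SC = \SC_0 + \SC_1$ and expectation is linear. First I would substitute $\bar\alpha = \eps/\zeta$ and the explicit costs into the bound. Writing $\sc_0(\bar\alpha\gamma^l) = \frac{\sigma_f^2}{\delta_0\kappa_{ef}^2\bar\alpha^4}\gamma^{-4l} = \mathcal{O}(\sigma_f^2/\eps^4)\,\gamma^{-4l}$ and $\sc_1(\bar\alpha\gamma^l) = \mathcal{O}(\sigma_g^2/\eps^2)\,\gamma^{-2l}$, the whole problem reduces to bounding, for $j \in \{2,4\}$, the sum
\[
\Sigma_j := \sum_{l=1}^{n}\min\Big\{1,\; n\big(\tfrac{q}{p}\big)^l + c\,(2q)^l\Big\}\,\gamma^{-jl},
\]
where $c = \frac{2\sqrt{pq}}{(1-2\sqrt{pq})^2}$. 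The two competing effects here are that the cost factor $\gamma^{-jl}$ grows geometrically in $l$ (the oracle cost blows up as the step size shrinks), while the probability of ever reaching level $l$ decays geometrically.

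Next I would split the sum at the threshold $l^* = \Theta(\log_{p/q} n)$, the value of $l$ at which $n(q/p)^l$ crosses below $1$ and the minimum switches from $1$ to the geometric term. For $l \le l^*$ the minimum equals $1$, so the head is $\sum_{l\le l^*}\gamma^{-jl}$; I would bound each of its $l^*$ terms by the largest one to get $\mathcal{O}(l^*\,\gamma^{-jl^*})$, which is exactly where the $\log_{p/q}(n)$ factor in the statement originates. For $l > l^*$ the minimum equals the geometric expression, and since $n(q/p)^{l^*}\approx 1$, the first tail term is already of order $\gamma^{-jl^*}$; the tail then sums as a convergent geometric series contributing only $\mathcal{O}(\gamma^{-jl^*})$, which is subdominant to the head.

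The algebraic identity that lands the exponent in the statement is $\gamma^{-jl^*} = n^{j\log_{q/p}\gamma}$, obtained from $l^* = \log_{p/q} n$ together with $\log_{q/p}\gamma = -\ln\gamma/\ln(p/q)$. Combining head and tail gives $\Sigma_j = \mathcal{O}(\log_{p/q}(n)\, n^{j\log_{q/p}\gamma})$; multiplying by $n$ and the respective prefactors, and noting the additive $n\,\sc(\bar\alpha)$ term is lower order, yields the two summands of the claimed bound. The final simplification under $\gamma \ge (q/p)^{\log c/\log n}$ is then immediate, since that choice forces $n^{\log_{q/p}\gamma}\le c = \mathcal{O}(1)$, collapsing both step-size-dependent powers to constants and leaving $n\log(n)\cdot\mathcal{O}(\sigma_f^2/\eps^4 + \sigma_g^2/\eps^2)$.

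The hard part — and the reason the hypothesis $\gamma > (2q)^{1/4}$ appears — is ensuring the tail is actually summable. For $j=4$ the binding requirement is $(2q)\gamma^{-4} < 1$, i.e.\ exactly $\gamma^4 > 2q$; the companion ratio $(q/p)\gamma^{-4}$ is automatically below $1$ because $2q > q/p$ when $p > \tfrac12$, and the $j=2$ case is implied since $\gamma^2 > \sqrt{2q} > 2q$. I expect verifying these ratio comparisons, and tracking that the $\mathcal{O}(\cdot)$ constants absorbing $\zeta$, $\delta_0$, $\delta_1$, $\kappa_{ef}$, $\kappa_{eg}$ and the geometric-series factors remain independent of $n$, to be the most delicate bookkeeping; but no single step is conceptually difficult once the head/tail split and the exponent identity are in place.
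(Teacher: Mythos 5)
Your proposal is correct and follows essentially the same route as the paper's proof: invoke \Cref{thm:sc_expected} with $\bar\alpha = \eps/\zeta$ and the explicit oracle costs, split the resulting sum at the threshold $l^* = \log_{p/q}(n)$ where $n(q/p)^l$ crosses $1$ (yielding the $\log_{p/q}(n)$ head factor and a convergent geometric tail), use the identity $\gamma^{-jl^*} = n^{j\log_{q/p}\gamma}$, and note that $\gamma > (2q)^{1/4}$ is exactly what makes the $(2q)^l\gamma^{-4l}$ series summable while $(q/p)\gamma^{-4} < 2q\gamma^{-4} < 1$ follows from $p > \tfrac12$. The only cosmetic difference is that the paper bounds $\min\{1, n(q/p)^l + c(2q)^l\}$ by splitting off the $(2q)^l$ term as a separate full series ($B$) rather than folding it into the tail past $l^*$, which changes nothing in the resulting bounds.
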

\begin{proof}

By Theorem \ref{thm:sc_expected}, the total expected cost of the zeroth-order oracle over $n$ iterations is bounded above by:
\begin{align*}
\E[\SC_0(\min\{T_\eps, n\})] &\leq n \,\sum_{l=1}^{n} \min \left\{ 1, n\left(\frac{q}{p}\right)^l + \frac{2 \sqrt{pq}}{(1-2 \sqrt{p q})^{2}}(2q)^l\right \}\cdot \sc_0(\bar{\alpha} \gamma^{l})
	+ n \,\sc_0(\bar{\alpha}) \\
	&\leq n\underbrace{\sum_{l=1}^{n} \min\left\{1, n\left(\frac{q}{p}\right)^l\right\}\cdot \sc_0(\bar{\alpha} \gamma^{l})}_{=:A} \\
	&\qquad\qquad + \frac{2n\sqrt{pq}}{(1-2\sqrt{pq})^2} \underbrace{\sum_{l=1}^{n}\left(2q\right)^{l} \cdot \sc_0(\bar{\alpha} \gamma^{l})}_{=: B}
	+ n\, \sc_0(\bar{\alpha}).
\end{align*}
 
For the zeroth-order oracle, $\sc_0(\alpha) = \frac{\sigma_f^2}{\delta_0\kappa_{ef}^2\alpha^4} = {\cal O}(\frac{\sigma_f^2}{\alpha^4})$. We use  this  to calculate upper bounds for $A$ and $B$. First, we consider $A$. Note that $\min\{1, n(\frac{q}{p})^l\} = 1$, if and only if $l \leq \log_{p/q}(n)$. Therefore, 
\begin{align*}
A &\leq 
\sum_{l=1}^{\log_{\frac{p}{q}}(n)} \sc_0(\bar{\alpha} \gamma^{l}) 
+ 
n\sum_{l \geq \log_{\frac{p}{q}}(n)}\left( \frac{q}{p}\right)^l \sc_0(\bar{\alpha} \gamma^{l}) \\
&\leq \sum_{l=1}^{\log_{\frac{p}{q}}(n)} {\cal O}\left(\frac{\sigma_f^2}{\bar{\alpha}^4\gamma^{4l}}  \right) 
+ 
n\sum_{l \geq \log_{\frac{p}{q}}(n)} \left( \frac{q}{p}\right)^l {\cal O}\left(\frac{\sigma_f^2}{\bar{\alpha}^4\gamma^{4l}} \right) \\
&\leq {\cal O}\left(\frac{\sigma_f^2}{\bar{\alpha}^4}  \right) 
\left(
\sum_{l=1}^{\log_{\frac{p}{q}}(n)} \frac{1}{\gamma^{4l}}
+
n\sum_{l \geq \log_{\frac{p}{q}}(n)} \left( \frac{q}{p}\right)^l \frac{1}{\gamma^{4l}}
\right) \\
&\leq  {\cal O}\left(\frac{\sigma_f^2}{\eps^4}  \right) 
\left(
\log_{\frac{p}{q}}(n)
\cdot \left(\frac{1}{\gamma^4}\right)^{\log_{\frac{p}{q}}(n)} 
+
n\left(\frac{q}{p\gamma^4}\right)^{\log_{\frac{p}{q}}(n)}
\frac{1}{1 - \frac{q}{p\gamma^4}}
\right) \\
&=   {\cal O}\left(\frac{\sigma_f^2}{\eps^4} \, \log_{\frac{p}{q}}(n) 
\, n^{\log_{\frac{p}{q}}\left(\frac{1}{\gamma^4}\right)}  \right) \\
&=   {\cal O}\left(\frac{\sigma_f^2}{\eps^4} \, \log_{\frac{p}{q}}(n) 
\, n^{4\log_{\frac{q}{p}}\gamma}  \right).
\end{align*}
Next we bound $B$. We have 
\begin{align*}
B = \sum_{l=1}^{n}\left(2q\right)^{l} \cdot \sc_0(\bar{\alpha} \gamma^{l}) 
\leq \sum_{l=0}^{\infty}\left(2q\right)^{l} \, {\cal O}\left(\frac{\sigma_f^2}{\bar{\alpha}^4\gamma^{4l}}  \right)
\leq {\cal O}\left(\frac{\sigma_f^2}{\eps^4}  \right) 
\left( \frac{1}{1 - 2q \cdot\frac{1}{\gamma^4}} \right) = {\cal O}\left(\frac{\sigma_f^2}{\eps^4}  \right) .
\end{align*}
Using these bounds on $A$ and $B$ in the  expression for $\E[\SC_0(\min\{T_\epsilon, n\})]$, we obtain the bound on the  
total function value sample complexity as $ {\cal O}\left(\frac{\sigma_f^2}{\eps^4} \, n \log_{\frac{p}{q}}(n) 
\, n^{4\log_{\frac{q}{p}}\gamma}  \right)$.

A similar calculation using the cost of the first-order oracle yields the bound $ {\cal O}\left(\frac{\sigma_g^2}{\eps^2} \, n \log_{\frac{p}{q}}(n) 
\, n^{2\log_{\frac{q}{p}}\gamma}  \right)$ for $\E[\SC_1(\min\{T_\epsilon, n\})]$. Since $\SC(\min\{T_\epsilon, n\})= \SC_0(\min\{T_\epsilon, n\})+\SC_1(\min\{T_\epsilon, n\})$ by definition, the result follows.

\end{proof}

Let us discuss the implications of Theorem \ref{thm:storm}.  In  \cite{arjevani2019lower}, a lower bound on the total gradient sample complexity for stochastic optimization of non-convex, smooth functions is derived and shown to be, in the worst case,  $\frac{C}{\eps^4}$, for some positive constant $C$.  This complexity lower bound holds even when exact function values $\phi(x)$ are also available. We note that the definition of complexity in \cite{arjevani2019lower} is the smallest number of sample gradient evaluations required to return a point $x$ with $\E[\norm{\nabla \phi (x)}] \leq \eps$, which is different from $\SC(T_{\epsilon})$ which we are aiming to bound here. We believe that the lower bound in \cite{arjevani2019lower} applies to our definition as well, but this is a subject of a separate study.

In \cite{blanchet2019convergence},  it is shown that $\E[T_\eps] \leq \frac{C_1}{\eps^2}$ for some $C_1$ sufficiently large that depends on $\delta_1$, $\delta_0$, $\kappa_{eg}, L$ and some algorithmic constants. 
Thus, if $n = \frac{C_1}{\eps^2}$ in inequality
 \eqref{eq:storm_toc_exp_beta} of Theorem \ref{thm:storm} , as long as $\gamma$ is sufficiently large,  we obtain 
$\E[\SC(\min\{T_\eps, n\})] \leq {\cal O}\left( (\frac{\sigma_f^2}{\eps^6}  + \frac{\sigma_g^2}{\eps^4}) \log(\frac{1}{\eps}) \right).$
In particular, the total gradient sample complexity is ${\cal O}\left(\frac{\sigma_g^2}{\eps^{4}} \log\left(\frac{1}{\eps}\right)\right)$, which essentially matches the complexity lower bound as described in \cite{arjevani2019lower} up to a logarithmic factor. The total function value sample complexity is worse than that of the gradient  if $\sigma_f^2$ is large. However, if $\sigma_f^2\leq {\cal O}(\sigma_g^2\eps^2)$ \rev{(which often happens in practice since $\sigma_g$ usually scales with the dimension of the problem, and tends to be much larger than $\sigma_f$)}, the total sample complexity bound  of
STORM matches the lower bound up to a logarithmic factor.

We note now that choosing   $n = \frac{C_1}{\eps^2}$ in Theorem \ref{thm:storm} does not in fact guarantee that $T_\eps<n$, since
for STORM, only a bound on  $\E[T_\eps]$ has been derived. However, 
this statement can be made true in probability, thanks to  \Cref{cor:sc_highprob}, by simply applying Markov inequality for $n = C_2\, \frac{C_1}{\eps^2}$ (where $C_2 > 1$).  

\begin{theorem}[High Probability Total Sample Complexity Bound of First-Order STORM] 
\label{th:storm_hp} For the first-order STORM algorithm applied to expected risk minimization, let 
$n$ be chosen such that $n \geq C_2\,\frac{C_1}{\eps^2} $ (for some $C_1$ sufficiently large so that $\frac{C_1}{\eps^2} \geq \E[T_\eps]$, and any $C_2>1$), and $\gamma$ be chosen so that $\gamma\geq \max\left\{\frac12, \left(\frac{1}{2q}\right)^{\frac{\log(2\beta)}{(1+\omega)\log n} } \right\}$ (for some $\beta\leq \frac12$, and any $\omega  > 0$). Then, with probability at least	$1-\frac{1}{C_2}- {\cal O}(n^{-\omega })$, 
	

	\begin{equation}
	\label{eq:storm_toc_hp_beta}
	\SC(T_{\eps})  \leq {\cal O}  \left(\frac{\sigma_f^2}{\eps^6} +  \frac{\sigma_g^2}{\eps^4}\right).
	\end{equation}
\end{theorem}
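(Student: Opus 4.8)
The plan is to obtain the high probability bound as a direct composition of three ingredients already available: the abstract high probability total oracle complexity in \Cref{cor:sc_highprob}, a Markov-inequality control of the stopping time (converting the in-expectation iteration bound of \cite{blanchet2019convergence} into a high probability statement), and the explicit STORM oracle costs $\sc_0,\sc_1$ together with the value $\bar\alpha = \eps/\zeta$.

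First I would control the stopping time. Since $C_1/\eps^2 \geq \E[T_\eps]$ and $n \geq C_2\, C_1/\eps^2 \geq C_2\,\E[T_\eps]$, Markov's inequality yields $\P(T_\eps > n) \leq \E[T_\eps]/n \leq 1/C_2$. This is exactly the quantity that appears in the failure probability of \Cref{cor:sc_highprob}, so STORM only having a bound on $\E[T_\eps]$ (rather than a deterministic or high probability bound on $T_\eps$) is not an obstacle. Next I would invoke the second part of \Cref{cor:sc_highprob}: the hypothesis imposed on $\gamma$ in the present theorem is precisely the condition guaranteeing $\alpha^*(n) \geq \beta\bar\alpha$, so the corollary gives that with probability at least $1 - \P(T_\eps > n) - n^{-\omega} - cn^{-(1+\omega)}$ we have $\SC(T_\eps) \leq n\cdot \sc(\beta\bar\alpha)$. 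Combining this with the Markov bound through a single union bound, and absorbing $cn^{-(1+\omega)}$ into $\mathcal{O}(n^{-\omega})$, the total failure probability is at most $1/C_2 + \mathcal{O}(n^{-\omega})$, matching the claimed success probability $1 - \tfrac{1}{C_2} - \mathcal{O}(n^{-\omega})$.

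Finally I would substitute the STORM-specific quantities to evaluate $n\cdot\sc(\beta\bar\alpha)$. Recall $\sc = \sc_0 + \sc_1$ with $\sc_0(\alpha) = \mathcal{O}(\sigma_f^2/\alpha^4)$ and $\sc_1(\alpha) = \mathcal{O}(\sigma_g^2/\alpha^2)$, and that $\bar\alpha = \eps/\zeta$. Treating $\beta$, $\zeta$, $\delta_0$, $\delta_1$, $\kappa_{ef}$, and $\kappa_{eg}$ as constants absorbed into the $\mathcal{O}(\cdot)$ notation, we get $\sc(\beta\bar\alpha) = \mathcal{O}\!\left(\sigma_f^2/\eps^4 + \sigma_g^2/\eps^2\right)$. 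Since $n = \mathcal{O}(1/\eps^2)$, multiplying gives $n\cdot\sc(\beta\bar\alpha) = \mathcal{O}\!\left(\sigma_f^2/\eps^6 + \sigma_g^2/\eps^4\right)$, which is the bound in \eqref{eq:storm_toc_hp_beta}.

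I do not expect a genuinely hard step here; the proof is essentially bookkeeping. The one point requiring care is coordinating the two failure modes — the event $\{T_\eps > n\}$ and the event that the step size parameter drops below $\beta\bar\alpha$ within the first $n$ iterations — through the union bound, and verifying that the choice of $\beta$ (used to keep $\alpha^*(n)$ bounded below by a constant fraction of $\bar\alpha$) interacts correctly with the $\gamma$-dependence. Both of these are already handled structurally by \Cref{cor:sc_highprob}, so the remaining work reduces to plugging in $\bar\alpha = \eps/\zeta$, the cost functions $\sc_0$ and $\sc_1$, and $n = \Theta(1/\eps^2)$.
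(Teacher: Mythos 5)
Your proof is correct and matches the paper's argument essentially verbatim: the paper's (one-line) proof of \Cref{th:storm_hp} is exactly an application of \Cref{cor:sc_highprob}, with the surrounding discussion supplying the same Markov-inequality step $\P(T_\eps > n) \leq \E[T_\eps]/n \leq 1/C_2$ to handle STORM having only an in-expectation iteration bound, followed by the same substitution of $\bar\alpha = \eps/\zeta$, the costs $\sc_0(\alpha) = \mathcal{O}(\sigma_f^2/\alpha^4)$ and $\sc_1(\alpha) = \mathcal{O}(\sigma_g^2/\alpha^2)$, and $n = \Theta(1/\eps^2)$. Your union-bound coordination of the two failure events and the absorption of $cn^{-(1+\omega)}$ into $\mathcal{O}(n^{-\omega})$ are precisely what the paper intends, so there is nothing missing or different to flag.
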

\begin{proof}
The theorem is a simple application of \Cref{cor:sc_highprob} to the specific setting. 
\end{proof}

\begin{remark}
\begin{enumerate}
	\item Compared to the expected total sample complexity bound, this high probability bound is smaller by a log factor.
	\item In \cite{gratton2018complexity},  a first-order  trust-region algorithm similar to STORM with the same first-order oracle  (i.e. TR1.1 with $\epsilon_g = 0$), but with an exact zeroth-order oracle (i.e. TR1.0 with $\kappa_{ef}=\delta_0=0$) is analyzed. In this case, it is shown that $\P(T_\eps > n) \leq \exp(-C_1n)$ (for some constant $C_1$ depends on $\delta_1$), for any $n \geq \frac{C_2}{\eps^2}$ (with some sufficiently large $C_2$). Using a similar application of  \Cref{cor:sc_highprob}, we can show that as long as $\gamma$ is sufficiently large, the total gradient sample complexity of that trust-region algorithm is bounded above by ${\cal O}(\frac{\sigma_g^2}{\eps^4})$ with probability at least $1 - \exp(-C_1n) - {\cal O}(n^{-\omega})$ (which is a significant improvement over the probability in Theorem \ref{th:storm_hp}).

	\item Another first-order trust-region algorithm, with weaker oracle assumptions than those in \cite{gratton2018complexity}  is introduced and analyzed in \cite{tr_witherrors}. This algorithm relies on the first-order oracle  as described in TR1.1 and the zeroth-order oracle  as described in SS.0. For this algorithm, it is shown that  $\P(T_\eps > n) \leq 2\exp(-C_1n) + \exp(-C_2)$ ($C_2$ being any positive constant), where $n =  C_3\frac{C_2}{\eps^2}$ for some sufficiently large $C_3$ and some positive $C_1$ that depends on $\delta_0$ and $\delta_1$. Thus, again, using  \Cref{cor:sc_highprob}  we can show that as long as  $\gamma$ is sufficiently large, the total sample complexity of the first-order trust-region algorithm in \cite{tr_witherrors} is bounded above by ${\cal O}(\frac{\sigma_f^2}{\eps^6} + \frac{\sigma_g^2}{\eps^4})$ with probability at least $1-2 \exp(-C_1n) - \exp(-C_2) - {\cal O}(n^{-\omega})$.

\end{enumerate}

\end{remark}

\subsection{Total sample complexity of SASS}

We now consider the SASS algorithm\footnote{This algorithm was also referred to as ALOE in \cite{aloe_neurips}. Its name has been changed to SASS since \cite{sass_arxiv}.}, analyzed in \cite{berahas2019global,sass_arxiv} and described in \Cref{sec:sass}. 
By Proposition 1, 2 and 4 of \cite{sass_arxiv},  \Cref{ass:alg_behave} is satisfied, with $\bar{\alpha}$ as given in the propositions. 




In the empirical risk minimization  setting,  the following  \textbf{assumptions} on $ l(x, d)$ are made in \cite{sass_arxiv}.

\begin{itemize}
\item Function value: It is assumed that $\abs{l(x,d) - \phi(x)}$ is a subexponential random variable and that there is some $\sigma_f \geq 0$ such that for all $x$, $\Var_{d\sim \mathcal{D}}\left[l(x,d)\right] \leq \sigma_f^2$. \\
For example, if $l(x,d)$ is uniformly bounded, then $\abs{l(x,d) - \phi(x)}$ is subexponential.
\item Gradient: It is assumed that $\E_{d \sim \mathcal{D}} [\nabla_x l(x, d)] = \nabla \phi(x)$, and that for some $M_c, M_v\geq 0$ and for all $x$, 
\begin{equation}\label{BCN}
	\E_{d\sim \mathcal{D}}\norm{\nabla_x l(x,d)-\nabla\phi(x)}^2\leq M_c+M_v\norm{\nabla\phi(x)}^2.
\end{equation} 
This assumption is fairly general and is studied in the literature \cite{bottou2018optimization}. 
\end{itemize}

For non-convex functions, the stopping time is defined as $T_\eps = \min\{k: \norm{\nabla \phi(x_k)} \leq \eps \}$, same as in the case of STORM. 
For strongly convex functions, the stopping time is defined as $T_{\eps} = \min\{k: \phi(x_k) - \inf_x \phi(x) \leq \eps\}$. To achieve the desired accuracy,  oracles SS.0 and SS.1 have to be sufficiently accurate in the sense that $\epsilon_f$ and $\epsilon_g$ have to be sufficiently small with respect to $\eps$. In the case of expected risk minimization, 
the oracles can be implemented for any $\epsilon_f$ and $\epsilon_g$ by choosing an appropriate mini-batch size. Thus, here we will first fix $\eps$ and then 
discuss the oracles that deliver sufficient accuracy for such $\eps$, for the theory in  \cite{sass_arxiv} to apply. 

\textbf{Oracle Costs per Iteration.} 
In \cite{sass_arxiv}, it is shown that given the desired convergence tolerance $\eps$, sufficiently accurate oracles SS.0 and SS.1 can be implemented for any 
 step size parameter  $\alpha$ as follows: 
\begin{itemize}
	\item Zeroth-order oracle: 
Proposition 5 of \cite{sass_arxiv} shows that a sufficiently accurate zeroth-order oracle can be obtained by using a minibatch of size 
\begin{equation}\label{eq:epsef}
\sc_0(\alpha) = 
\begin{cases}
{\cal O}(\sigma_f^2/\eps^4), &\text{for the  non-convex case,} \\
{\cal O}(\sigma_f^2/\eps^2), &\text{for the strongly convex case.}
\end{cases}
\end{equation}
Note that the cost of the zeroth-order oracle is independent of $\alpha$.
\item First-order oracle: Proposition 6 of \cite{sass_arxiv} implies that a sufficiently accurate first-order oracle can be obtained by using a minibatch of size
%
%
\begin{equation}
\label{eq:sass_foc}
\sc_1(\alpha) = 
\begin{cases}
{\cal O}\left(\frac{M_c}{\eps^2} + \frac{M_v}{\min\{{\tau, \kappa\alpha\}^2}}\right), &\text{for the non-convex case,} \\
{\cal O}\left(\frac{M_c}{\eps} + \frac{M_v}{\min\{{\tau, \kappa\alpha\}^2}}\right), &\text{for the strongly convex case.}
\end{cases}
\end{equation}
The cost of the first-order oracle is indeed non-increasing in $\alpha$, so \Cref{ass:monotone} is satisfied. 
For simplicity of the presentation and essentially without loss of generality, we will assume $\tau \geq  \kappa\bar \alpha$.  
%
\end{itemize}

Substituting these bounds into \Cref{thm:sc_expected}, we obtain the following expected total sample complexity.

\begin{theorem}[Expected Total Sample Complexity of SASS]
\label{thm:sass_exp}
For the SASS algorithm applied to expected risk minimization, for any iteration \rev{number} $n \in \Z^+$, and any $\gamma > (2q)^{\frac12}$, we have

\begin{itemize}
\item Non-convex case:
\begin{align*}
\E[\SC(\min\{T_\eps, n\})] \leq  \;
& {\cal O}\left(\frac{\sigma_f^2}{\eps^4} \cdot n + \frac{M_c}{ \eps^2}
\cdot n
+ M_v \cdot n
\left(
n^{\log_{\frac{p}{q}}\left(\frac{1}{\gamma^2}\right)} 
\log_{\frac{p}{q}}(n)
\right)\right).
\end{align*}
\item Strongly convex case:
\begin{align*}
\E[\SC(\min\{T_\eps, n\})] \leq  \;
& {\cal O}\left(\frac{\sigma_f^2}{\eps^2} \cdot n + \frac{M_c}{ \eps}
\cdot n
+ M_v\cdot n
\left(
n^{\log_{\frac{p}{q}}\left(\frac{1}{\gamma^2}\right)} 
\log_{\frac{p}{q}}(n)
\right)\right).
\end{align*}
\end{itemize}

Moreover, if $\gamma \geq \left(\frac{q}{p}\right)^{\frac{\log c}{2\log n}}$ for some constant $c > 1$ (so that $n^{\log_{\frac{p}{q}}(\frac{1}{\gamma^2})} \leq c$), the above simplifies to
\begin{itemize}
	\item Non-convex case:
	\begin{equation}
\label{eq:sass_toc_exp_beta_non}
\E[\SC(\min\{T_\epsilon, n\})] \leq {\cal O} \left(n \left(\frac{\sigma_f^2}{\eps^4} + \frac{M_c}{\eps^2} +  M_v \log(n)\right)\right).
\end{equation}
	\item Strongly convex case:
	\begin{equation}
\label{eq:sass_toc_exp_beta_strong}
\E[\SC(\min\{T_\epsilon, n\})] \leq {\cal O} \left(n \left(\frac{\sigma_f^2}{\eps^2} + \frac{M_c}{\eps} +  M_v \log(n)\right)\right).
\end{equation}
\end{itemize}

\end{theorem}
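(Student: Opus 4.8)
The plan is to substitute the per-iteration oracle costs \eqref{eq:epsef} and \eqref{eq:sass_foc} into the abstract bound of \Cref{thm:sc_expected}, closely following the computation already carried out for STORM in \Cref{thm:storm}. First I would write $\SC = \SC_0 + \SC_1$ and further split the first-order cost according to the two summands in \eqref{eq:sass_foc}: the $\alpha$-independent term ($\frac{M_c}{\eps^2}$ in the non-convex case, $\frac{M_c}{\eps}$ in the strongly convex case) and the $\alpha$-dependent term $\frac{M_v}{\min\{\tau,\kappa\alpha\}^2}$. Under the standing assumption $\tau \geq \kappa\bar\alpha$, every realized step size satisfies $\alpha \leq \bar\alpha$, so $\min\{\tau,\kappa\alpha\} = \kappa\alpha$ and this term equals $\frac{M_v}{\kappa^2\alpha^2} = {\cal O}(M_v/\alpha^2)$.

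The two $\alpha$-independent pieces can be handled without invoking \Cref{thm:sc_expected} at all. Since $\sc_0$ and the $M_c$-part of $\sc_1$ do not depend on $\alpha$, the corresponding partial sums are deterministically equal to $\min\{T_\eps,n\}$ times a constant, hence at most $n\,\sc_0 = {\cal O}(n\sigma_f^2/\eps^4)$ (resp.\ ${\cal O}(n\sigma_f^2/\eps^2)$) and $n\cdot{\cal O}(M_c/\eps^2)$ (resp.\ ${\cal O}(M_c/\eps)$). This is precisely what produces the first two terms of each claimed bound, crucially \emph{without} an extra logarithmic factor.

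The only genuinely $\alpha$-dependent cost is the $M_v$ term, and for it I would apply \Cref{thm:sc_expected} with $\sc(\bar\alpha\gamma^l) = {\cal O}(M_v/(\bar\alpha^2\gamma^{2l}))$, reproducing the $A$-plus-$B$ decomposition of \Cref{thm:storm} but with the exponent $4$ replaced by $2$. For $A=\sum_l\min\{1,n(q/p)^l\}\sc(\bar\alpha\gamma^l)$ I would split at $l^\star=\log_{p/q}n$: the head $\sum_{l\le l^\star}\gamma^{-2l}$ is a geometric sum dominated by its last term, bounded by $\log_{p/q}(n)\cdot n^{\log_{p/q}(1/\gamma^2)}$, while the tail $n\sum_{l>l^\star}(q/(p\gamma^2))^l$ contributes ${\cal O}(n^{\log_{p/q}(1/\gamma^2)})$ after using $(q/p)^{l^\star}=n^{-1}$ (its convergence needing $\gamma^2>q/p$, which follows from $\gamma>(2q)^{1/2}$ together with $p>\frac12$). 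For $B=\sum_l(2q)^l\sc(\bar\alpha\gamma^l)$ the geometric ratio is $2q/\gamma^2$, and here the hypothesis $\gamma > (2q)^{1/2}$ is exactly what guarantees convergence, giving $B={\cal O}(M_v/\bar\alpha^2)$. Collecting, the $M_v$ contribution is ${\cal O}\!\left(M_v\, n\, n^{\log_{p/q}(1/\gamma^2)}\log_{p/q}(n)\right)$; summing the three pieces yields the non-convex bound, and the strongly convex bound is identical except that $\sc_0$ and the $M_c$-part carry $\eps^{-2}$ and $\eps^{-1}$ in place of $\eps^{-4}$ and $\eps^{-2}$.

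Finally, for the simplified bounds \eqref{eq:sass_toc_exp_beta_non} and \eqref{eq:sass_toc_exp_beta_strong} I would substitute $\gamma \geq (q/p)^{\log c/(2\log n)}$, which forces $\log_{p/q}(1/\gamma^2)\le \log c/\log n$ and hence $n^{\log_{p/q}(1/\gamma^2)}\le c$, collapsing the $M_v$ term to ${\cal O}(M_v\, n\log n)$. The main obstacle is the geometric-series bookkeeping for the $M_v$ term --- correctly tracking the $n^{\log_{p/q}(1/\gamma^2)}$ factors and verifying the convergence threshold $\gamma>(2q)^{1/2}$ --- but this runs entirely parallel to the completed STORM calculation, so the only essential new point is the observation that the $\alpha$-independent costs can be bounded deterministically and therefore escape the logarithmic overhead.
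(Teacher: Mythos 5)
Your proposal is correct and follows essentially the same route as the paper's proof: the zeroth-order and $M_c$ costs are $\alpha$-independent and are bounded deterministically by $n$ times the per-call cost, while the $M_v$ term is fed through \Cref{thm:sc_expected} with $\sc_{1,2}(\bar\alpha\gamma^l)={\cal O}(M_v/(\bar\alpha^2\gamma^{2l}))$ and the same $A$/$B$ geometric-series bookkeeping as in \Cref{thm:storm} (exponent $2$ in place of $4$), the simplified bounds following from the same substitution for $\gamma$. One small repair: it is not true that every realized step size satisfies $\alpha\leq\bar\alpha$ (indeed $\alpha_0\geq\bar\alpha$ is assumed without loss of generality); the correct justification for replacing $\min\{\tau,\kappa\alpha\}$ by $\kappa\alpha$ is that the bound of \Cref{thm:sc_expected} only evaluates the cost at $\bar\alpha\gamma^l$ with $l\geq 0$, larger step sizes being absorbed into the $n\,\sc(\bar\alpha)$ term via \Cref{ass:monotone} --- which is exactly the observation made in the paper.
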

\begin{proof}

Since the cost of each call to the zeroth-order oracle \eqref{eq:epsef} is independent of $\alpha$, the total function value sample complexity over $n$ iterations is simply obtained by multiplying \eqref{eq:epsef} by $n$.

The  cost of the first-order oracle \eqref{eq:sass_foc} consists of two parts, $\sc_1(\alpha) = \sc_{1,1}(\alpha) + \sc_{1,2}(\alpha)$. The first part, $\sc_{1,1}(\alpha)$,  is ${\cal O}(\frac{M_c}{\eps^2})$. Since this is \emph{independent} of $\alpha$, the total contribution of this part to the total gradient sample complexity over $n$ iterations is $n \, \sc_{1,1}(\alpha)$, which is bounded by ${\cal O}(\frac{M_c}{\eps^2}\,n)$.

The second part of the cost of the first-order oracle is $\sc_{1,2}(\alpha) := {\cal O}(\frac{M_v}{\min\{\tau, \kappa\alpha\}^2})$. By Theorem \ref{thm:sc_expected}, the total expected cost over $n$ iterations from this part is bounded above by:
\begin{align*}
 &n \,\sum_{l=1}^{n} \min \left\{ 1, n\left(\frac{q}{p}\right)^l + \frac{2 \sqrt{pq}}{(1-2 \sqrt{p q})^{2}}(2q)^l\right \}\cdot \sc_{1,2}(\bar{\alpha} \gamma^{l})
	+ n \,\sc_{1,2}(\bar{\alpha}) \\
	&\leq n\underbrace{\sum_{l=1}^{n} \min\left\{1, n\left(\frac{q}{p}\right)^l\right\}\cdot \sc_{1,2}(\bar{\alpha} \gamma^{l})}_{=:A}
	 + \frac{2n\sqrt{pq}}{(1-2\sqrt{pq})^2} \underbrace{\sum_{l=1}^{n}\left(2q\right)^{l} \cdot \sc_{1,2}(\bar{\alpha} \gamma^{l})}_{=: B}
	+ n \sc_{1,2}(\bar{\alpha}).
\end{align*}

Note that the expression above only involves $\sc_{1,2}(\alpha)$ for $\alpha \leq \bar{\alpha}$. Therefore, by our earlier assumption that $\tau \geq \kappa\bar\alpha$, we have $\sc_{1,2}(\alpha) = {\cal O}(\frac{M_v}{\kappa^2\alpha^2}) = {\cal O}(\frac{M_v}{\alpha^2})$ (since $\kappa$ is a constant). We now use this to calculate upper bounds for $A$ and $B$. Using similar arguments as the proof of \Cref{thm:storm}, we have 
$$A = {\cal O}\left(\frac{M_v}{\bar{\alpha}^2} \, \log_{\frac{p}{q}}(n) \, n^{\log_{\frac{p}{q}}\left(\frac{1}{\gamma^2}\right)}  \right) 
 ~\text{and}~ B = {\cal O}\left(\frac{M_v}{\bar{\alpha}^2}  \right) .$$

Together with the previous arguments, the result follows.
\end{proof}

In \cite{sass_arxiv}, the iteration bound on $T_\epsilon$ is shown to be $\frac{C_1}{\eps^2}+\log_{1/\gamma} \frac{\alpha_0}{\bar{\alpha}}$ in the non-convex case, and $C_2\log \frac{1}{\eps}+\log_{1/\gamma} \frac{\alpha_0}{\bar{\alpha}}$ in the strongly convex case (for some $C_1$ and $C_2$ sufficiently large) in high probability. Thus, we can select $n$ appropriately and derive the high probability bound on the total sample complexity of SASS using \Cref{cor:sc_highprob}.

\begin{theorem}[High Probability Total Sample Complexity of SASS] 
\label{thm:sass_hp}
For the SASS algorithm applied to expected risk minimization, let $n \geq \frac{C_1}{\eps^2}+\log_{1/\gamma} \frac{\alpha_0}{\bar{\alpha}}$ in the non-convex case, and $n \geq C_2\log \frac{1}{\eps}+\log_{1/\gamma} \frac{\alpha_0}{\bar{\alpha}}$ in the strongly convex case.

For any $\omega  > 0$,
	with probability at least $1-2\exp\left(-C_3n\right) - {\cal O}(n^{-\omega })$ (for some $C_3 >0$), we have
	\begin{itemize}
		\item Non-convex case:
		\begin{equation}
	\label{eq:sass_toc_hp_non}
	\SC(T_{\eps})  \leq {\cal O}\left(\left(\frac{1}{\eps^2}+\log_{1/\gamma} \frac{\alpha_0}{\bar{\alpha}}\right)\cdot\left( \frac{\sigma_f^2}{\eps^4}  + \frac{M_c}{\eps^2} + \frac{M_v}{\eps^{4(1+\omega)\log_{2q}(\gamma)}} \right)\right).
	\end{equation}
		\item Strongly convex case:
		\begin{equation}
	\label{eq:sass_toc_hp_strong}
	\SC(T_{\eps})  \leq {\cal O}\left(\left ( \log\frac{1}{\eps}+\log_{1/\gamma} \frac{\alpha_0}{\bar{\alpha}}\right )\cdot \left(\frac{\sigma_f^2}{\eps^2}  + \frac{M_c}{\eps} + {M_v}\left(\log \frac{1}{\eps}\right)^{2(1+\omega)\log_{2q}(\gamma)}\right) \right).
	\end{equation}
	\end{itemize}
	
	If $\gamma \in \left[\, \max\left\{\frac12, \left(\frac{1}{2q}\right)^{\frac{\log(2\beta)}{(1+\omega)\log n} } \right\}, \left(\frac{\bar{\alpha}}{\alpha_0}\right)^{\frac{1}{cn}}\,\right], $ where $\beta$ is any constant smaller than $\frac12$, and $c$ is any constant in $(0,1)$, the above simplifies to
	\begin{itemize}
		\item Non-convex case:
	\begin{equation}
	\label{eq:sass_toc_hp_beta_non}
	\SC(T_{\eps})  \leq {\cal O}\left(\frac{1}{\eps^2}\cdot\left( \frac{\sigma_f^2}{\eps^4} +  \frac{M_c}{\eps^2} + \frac{M_v}{\beta^2}\right)\right).
	\end{equation}
		\item Strongly convex case:
		\begin{equation}
	\label{eq:sass_toc_hp_beta_strong}
	\SC(T_{\eps})  \leq {\cal O}\left(\log\frac{1}{\eps}\cdot\left( \frac{\sigma_f^2}{\eps^2} +  \frac{M_c}{\eps} + \frac{M_v}{\beta^2}\right)\right).
	\end{equation}
	\end{itemize}

\end{theorem}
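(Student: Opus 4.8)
The plan is to specialize the abstract high-probability bound of \Cref{cor:sc_highprob} to the concrete SASS oracle costs from \eqref{eq:epsef} and \eqref{eq:sass_foc}. The key structural observation is that the per-iteration cost $\sc = \sc_0 + \sc_{1,1} + \sc_{1,2}$ splits into an $\alpha$-independent part and a single $\alpha$-dependent part: the zeroth-order cost $\sc_0$ and the first summand $\sc_{1,1}$ of the first-order cost do not depend on $\alpha$ (they are ${\cal O}(\sigma_f^2/\eps^4)$ and ${\cal O}(M_c/\eps^2)$ in the non-convex case, and ${\cal O}(\sigma_f^2/\eps^2)$ and ${\cal O}(M_c/\eps)$ in the strongly convex case), while only $\sc_{1,2}(\alpha) = {\cal O}(M_v/\min\{\tau,\kappa\alpha\}^2)$ varies with $\alpha$. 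Under the standing assumption $\tau \geq \kappa\bar\alpha$, on the relevant range $\alpha \leq \bar\alpha$ this reduces to $\sc_{1,2}(\alpha) = {\cal O}(M_v/\alpha^2)$.

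First I would invoke \Cref{cor:sc_highprob}, which guarantees that with probability at least $1 - \P(T_\eps > n) - n^{-\omega} - cn^{-(1+\omega)}$ we have $\SC(T_\eps) \leq n\cdot\sc(\alpha^*(n))$, where $\alpha^*(n) = \bar\alpha\gamma\, n^{-(1+\omega)\log_{1/2q}(1/\gamma)}$. Substituting the costs above, the $\alpha$-independent contributions are immediately $n\cdot{\cal O}(\sigma_f^2/\eps^4)$ and $n\cdot{\cal O}(M_c/\eps^2)$ (resp.\ with $\eps^2$ and $\eps$ in the strongly convex case), so the only remaining task is to bound $n\cdot\sc_{1,2}(\alpha^*(n)) = n\cdot{\cal O}(M_v/(\alpha^*(n))^2)$.

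The crux is to convert the step-size lower bound into a power of the tolerance. Using the logarithm identity $\log_{1/2q}(1/\gamma) = \log_{2q}\gamma$, I would compute $(\alpha^*(n))^{-2} = \bar\alpha^{-2}\gamma^{-2}\, n^{2(1+\omega)\log_{2q}\gamma}$ and absorb the constants $\bar\alpha, \gamma$ into the ${\cal O}(\cdot)$. Substituting the prescribed $n$ then splits into the two cases: in the non-convex case $n = \Theta(1/\eps^2)$ gives $n^{2(1+\omega)\log_{2q}\gamma} = \Theta(\eps^{-4(1+\omega)\log_{2q}\gamma})$, producing the term $M_v/\eps^{4(1+\omega)\log_{2q}\gamma}$; in the strongly convex case $n = \Theta(\log(1/\eps))$ produces $M_v(\log(1/\eps))^{2(1+\omega)\log_{2q}\gamma}$. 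Collecting the three contributions and factoring out the common factor $n = \Theta\!\left(\tfrac{1}{\eps^2} + \log_{1/\gamma}\tfrac{\alpha_0}{\bar\alpha}\right)$ (resp.\ $\Theta\!\left(\log\tfrac{1}{\eps} + \log_{1/\gamma}\tfrac{\alpha_0}{\bar\alpha}\right)$) recovers \eqref{eq:sass_toc_hp_non} and \eqref{eq:sass_toc_hp_strong}.

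For the probability, I would use the high-probability iteration bound of \cite{sass_arxiv}: for the prescribed $n$ one has $\P(T_\eps > n) \leq 2\exp(-C_3 n)$, and since $n^{-\omega} + cn^{-(1+\omega)} = {\cal O}(n^{-\omega})$, the overall success probability is $1 - 2\exp(-C_3 n) - {\cal O}(n^{-\omega})$. Finally, the simplified bounds \eqref{eq:sass_toc_hp_beta_non} and \eqref{eq:sass_toc_hp_beta_strong} follow by restricting $\gamma$ to the stated interval: the lower endpoint invokes \Cref{cor:stepsize_bound} to force $\alpha^*(n) \geq \beta\bar\alpha$, whence $\sc_{1,2}(\alpha^*(n)) = {\cal O}(M_v/\beta^2)$ becomes $\alpha$-free, while the upper endpoint $\gamma \leq (\bar\alpha/\alpha_0)^{1/(cn)}$ forces $\log_{1/\gamma}(\alpha_0/\bar\alpha) \leq cn$ with $c < 1$, so that $n = \Theta(1/\eps^2)$ (resp.\ $\Theta(\log(1/\eps))$) and the expressions collapse to the clean form. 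I expect the main obstacle to be purely bookkeeping in this step-size-to-tolerance conversion — keeping the exponent $2(1+\omega)\log_{2q}\gamma$ correct through the logarithm identities and the substitution of $n$ — rather than any conceptual difficulty, since the heavy lifting is already carried by \Cref{cor:sc_highprob} and the coupling-based step-size lower bound of \Cref{thm:stepsize_bound}.
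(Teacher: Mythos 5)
Your proposal is correct and follows essentially the same route as the paper: the paper's (much terser) proof likewise obtains \eqref{eq:sass_toc_hp_non} and \eqref{eq:sass_toc_hp_strong} by plugging the oracle costs \eqref{eq:epsef} and \eqref{eq:sass_foc} into \Cref{cor:sc_highprob} together with the high-probability iteration bound of \cite{sass_arxiv}, and derives the simplified bounds from the stated $\gamma$-range exactly as you do (lower endpoint giving $\alpha^*(n)\geq\beta\bar{\alpha}$ so the $M_v$ term becomes ${\cal O}(M_v/\beta^2)$, upper endpoint giving $\log_{1/\gamma}\frac{\alpha_0}{\bar{\alpha}}\leq cn$ with $c\in(0,1)$ so that $n={\cal O}(1/\eps^2)$, resp.\ ${\cal O}(\log(1/\eps))$, suffices). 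Your explicit bookkeeping — splitting $\sc$ into $\alpha$-independent parts plus $\sc_{1,2}(\alpha)={\cal O}(M_v/\alpha^2)$, and the identity $\log_{1/2q}(1/\gamma)=\log_{2q}\gamma$ to convert $(\alpha^*(n))^{-2}$ into $n^{2(1+\omega)\log_{2q}\gamma}$ — is exactly the computation the paper leaves implicit, so there is no gap.
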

\begin{proof}
The bounds \eqref{eq:sass_toc_hp_non} and \eqref{eq:sass_toc_hp_strong} follow by using \eqref{eq:epsef} and \eqref{eq:sass_foc} in \Cref{cor:sc_highprob}, and Theorem 3.8 of \cite{sass_arxiv}.

The bounds \eqref{eq:sass_toc_hp_beta_non} and \eqref{eq:sass_toc_hp_beta_strong} follow from \eqref{eq:sass_toc_hp_non} and \eqref{eq:sass_toc_hp_strong}, respectively, by using the fact that $\gamma$ lies in the appropriate range and $\log_{1/\gamma} \frac{\alpha_0}{\bar{\alpha}} \leq cn$ with $c \in (0,1)$.

In the non-convex case, we have $\frac{1}{\eps^2} + \log_{1/\gamma} \frac{\alpha_0}{\bar{\alpha}} = {\cal O}(\frac{1}{\eps^2})$ and $\frac{M_v}{\eps^{4(1+\omega)\log_{2q}(\gamma)}} = \mathcal{O}(\frac{M_v}{\beta^2})$ when $\gamma$ lies in the specified range. It is worth noting that $c \in (0,1)$ implies there is some $n={\cal O}(\frac{1}{\eps^2})$ that satisfies $n \geq \frac{C_1}{\eps^2}+\log_{1/\gamma} \frac{\alpha_0}{\bar{\alpha}}$. The result for the strongly convex case follows similarly.

\end{proof}

\begin{remark}
	\begin{enumerate}
	\item 		
	We consider some of the implications of \Cref{thm:sass_hp} below. Similar implications hold for the expected total sample complexity.	
	\begin{itemize}
		\item In the non-convex case, from \eqref{eq:sass_toc_hp_beta_non}, the total function value sample complexity is ${\cal O}\left(\frac{\sigma_f^2}{\eps^6} \right)$  and the total gradient sample complexity is  ${\cal O}\left(\frac{M_c}{\eps^4} +  \frac{M_v}{\eps^2}\right)$. 
				In particular, the total gradient sample complexity matches that of SGD, and it essentially matches the complexity lower bound as described in \cite{arjevani2019lower} (for a different definition of complexity). Specifically, if $\sigma_f=0$  (i.e., function values are exact),
	the lower bound in \cite{arjevani2019lower} applies and the total sample complexity of SASS	matches it.

 \item
If $M_c=0$ (sometimes referred to as the interpolation case), then the total gradient sample complexity reduces to ${\cal O}\left(\frac{M_v}{\eps^2} \right)$. Hence, the total gradient sample complexity matches that of SGD under interpolation \cite{bottou2018optimization,bach2011nonasymptotic}.

		\item In the strongly convex case, the total function value sample complexity is ${\cal O}( \frac{\sigma_f^2}{\eps^2} \log\frac{1}{\eps})$  and the total gradient sample complexity is ${\cal O}( (\frac{M_c}{\eps} + M_v)\log\frac{1}{\eps})$.
		In particular, the total gradient sample complexity matches that of SGD up to a logarithmic term.
		\end{itemize}
	\rev{\item Our framework can be applied to the convex setting as well. We focus on the non-convex and strongly convex settings in this paper for brevity and to cleanly illustrate how our framework can be applied, since the convex case requires some additional complications in the presentation.  These complications are present in the previous papers that analyze iteration complexity bounds, and are not specific to this paper. For example, the stopping time in the convex setting for SASS is defined in terms of two parameters $\eps = (\eps_0, \eps_1)$ as follows: $T_\eps = \min\{k: \phi(X_k) - \phi(x^*) \leq \eps_0 ~\text{or}~ \norm{\nabla\phi(X_k)} \leq \eps_1\}$.}
	\end{enumerate}

\end{remark}

\section{Conclusion}
\label{sec:conclusion}

We analyzed the behavior of the step size parameter in \Cref{alg:generic_stochastic}, an adaptive stochastic optimization framework that encompasses a wide class of algorithms analyzed in recent literature. We derived a high probability lower bound for this parameter, and as a result, developed a simple strategy for controlling this lower bound. 

For many settings, having a fixed lower bound on the step size parameter implies an upper bound on the cost of the oracles that compute the gradient and function estimates.  We developed a framework to analyze the expected and high probability total oracle complexity for this general class of algorithms, and illustrated the use of it by deriving total sample complexity bounds for two specific algorithms - the first-order stochastic trust-region (STORM) algorithm \cite{blanchet2019convergence} and a stochastic step search (SASS) algorithm \cite{sass_arxiv} in the expected risk minimization setting.   We showed that the sample complexity of both these algorithms essentially matches the complexity lower bound of first-order algorithms for stochastic non-convex optimization \cite{arjevani2019lower}, which was not known before. 

\section*{Acknowledgments}
	
	The authors wish to thank James Allen Fill for pointing out useful references and the proof of Proposition \ref{prop:Fill}.
	This work was partially supported by NSF Grants  TRIPODS 17-40796, CCF 2008434, CCF 2140057 and 
	ONR Grant N00014-22-1-2154. 
	 Billy Jin was partially supported by NSERC fellowship PGSD3-532673-2019. 

\bibliographystyle{alpha} 
\bibliography{references}

\end{document}